\documentclass[11pt]{amsart}
\usepackage[T1]{fontenc}
\usepackage[utf8]{inputenc}
\usepackage{amsmath,amssymb,amsthm}
\usepackage{listings}
\usepackage{booktabs}
\usepackage{hyperref}

\lstdefinestyle{python}{
  language=Python,
  basicstyle=\ttfamily\scriptsize,
  numbers=none,
  breaklines=true,
  showstringspaces=false,
  columns=fullflexible,
  frame=single,
  upquote=true,
}

\theoremstyle{plain}
\newtheorem{theorem}{Theorem}
\newtheorem{lemma}[theorem]{Lemma}
\newtheorem{proposition}[theorem]{Proposition}
\newtheorem{corollary}[theorem]{Corollary}
\theoremstyle{definition}
\newtheorem{remark}[theorem]{Remark}

\DeclareMathOperator{\fix}{fix}

\newcommand{\Tn}{T_n}
\newcommand{\Un}{U_n}
\newcommand{\Dn}{D_n}
\newcommand{\Vn}{V_n}
\newcommand{\up}{u}
\newcommand{\dn}{d}
\newcommand{\Hs}{H^{\mathrm{s}}_n}
\newcommand{\Hp}{H^{\mathrm{p}}_n}
\newcommand{\ind}[1]{\left[#1\right]}

\title[Non-adjacent cell triples in a triangular grid]
      {Non-adjacent triples of cells in a triangular grid:
       proofs of three conjectured generating functions}

\author{Tong Niu}

\subjclass[2020]{Primary 05A15; Secondary 05C30, 05B45}
\keywords{triangular grid, independent sets, Burnside's lemma,
quasi-polynomial, generating function, OEIS}

\date{\today}

\begin{document}

\begin{abstract}
Let $\Tn$ be the triangular grid of side $n$, with its $n^2$ unit cells, and let
two cells be \emph{side-adjacent} if they share an edge and \emph{point-adjacent}
if they share a vertex. We count the orbits of $3$-element sets of pairwise
non-adjacent cells under the rotation group $C_3$ and under the full symmetry
group $D_3$ of $\Tn$. Three of the four resulting sequences are
OEIS \texttt{A397730}, \texttt{A397731} and \texttt{A397732}, for each of which
the only formula on record is a generating function conjectured by Sean A.\
Irvine on 14 July 2026. We prove all three conjectures. The route is Burnside's
lemma, together with exact lattice counts for each fixed-point term. The
side-adjacency graph turns out to be bipartite, so its triangle count vanishes,
and the triangles of the point-adjacency graph have a clean structural
description, which gives that count too. Along the way we obtain the first
explicit closed forms for the three sequences, each a quasi-polynomial of degree
$6$ and period $3$, $6$, $3$ respectively. The same machinery settles the fourth
member of the family, whose closed form has been recorded at \texttt{A289230}
without proof since 2017, and the labeled count \texttt{A289224} that feeds it.
\end{abstract}

\maketitle

\section{Introduction}

\subsection{The grid and the two adjacency relations}

Fix $n \ge 1$ and let $\Tn$ be the equilateral triangular grid of side $n$: the
triangle of side $n$ cut by the three families of lattice lines into $n^2$ unit
cells, $\binom{n+1}{2}$ of them pointing the same way as $\Tn$ (we call these
\emph{up-cells}) and $\binom{n}{2}$ pointing the other way (\emph{down-cells}).

Oblique lattice coordinates are used throughout, they turn every count below
into a lattice-point count in a triangle. Put
\begin{align*}
  \Vn &= \{(x,y) \in \mathbb{Z}^2_{\ge 0} : x + y \le n\},\\
  \Un &= \{(i,j) \in \mathbb{Z}^2_{\ge 0} : i + j \le n-1\}, \qquad
  \Dn = \{(i,j) \in \mathbb{Z}^2_{\ge 0} : i + j \le n-2\},
\end{align*}
and for $(i,j) \in \Un$, $(i,j) \in \Dn$ let
\begin{equation}\label{eq:cells}
  \up(i,j) = \bigl\{(i,j),\,(i+1,j),\,(i,j+1)\bigr\}, \quad
  \dn(i,j) = \bigl\{(i{+}1,j),\,(i,j{+}1),\,(i{+}1,j{+}1)\bigr\}.
\end{equation}
So a cell \emph{is} its set of three vertices, which is all we shall ever need
to know about it; $\Vn$ is the vertex set of $\Tn$, and $\Tn$ has
$|\Un| + |\Dn| = \binom{n+1}{2} + \binom{n}{2} = n^2$ cells. We realise the
picture in the plane by $(x,y) \mapsto x e_1 + y e_2$ with
$e_1 = (1,0)$ and $e_2 = (\tfrac12, \tfrac{\sqrt3}{2})$, so that
$e_1 \cdot e_2 = \tfrac12$, every cell is a unit equilateral triangle, and
\begin{equation}\label{eq:norm}
  |a e_1 + b e_2|^2 = a^2 + ab + b^2 .
\end{equation}

Two cells are \emph{side-adjacent} when they share an edge, equivalently when
their vertex sets meet in two points, and \emph{point-adjacent} when they share
at least one vertex. Write $\Hs$ and $\Hp$ for the two resulting graphs on the
$n^2$ cells. Independence of a set of cells is always meant in whichever of the
two graphs is under discussion at the time.

The symmetry group of $\Tn$ is $D_3$, generated by the rotation $\rho$ through
$120^\circ$ about the centre and a reflection $\sigma$ in an altitude. On
coordinates,
\begin{equation}\label{eq:group}
  \rho(x,y) = (y,\, n-x-y), \qquad \sigma(x,y) = (y,x),
\end{equation}
and $C_3 = \langle \rho \rangle$.

\subsection{The sequences and the conjectures}

Janaka Rodrigo introduced three sequences in July 2026, each counting orbits of
$3$-element sets of cells:
\begin{itemize}
  \item \texttt{A397730}$(n)$: $C_3$-orbits of independent $3$-sets of $\Hs$;
  \item \texttt{A397731}$(n)$: $D_3$-orbits of independent $3$-sets of $\Hs$;
  \item \texttt{A397732}$(n)$: $C_3$-orbits of independent $3$-sets of $\Hp$.
\end{itemize}
On 14 July 2026 Sean A.\ Irvine added a conjectured generating function to each
entry, fitted to the terms he had computed. Write $F_1$, $F_2$, $F_3$ for the
generating functions
$\sum_{n \ge 1} \texttt{A397730}(n) x^n$,
$\sum_{n \ge 1} \texttt{A397731}(n) x^n$ and
$\sum_{n \ge 1} \texttt{A397732}(n) x^n$. Irvine's three lines, quoted verbatim
and still the only formulas on record \cite{A397730,A397731,A397732}, read
\begin{align}
  F_1(x) &\overset{?}{=}
     \frac{x^2\,(1+7x+52x^2+42x^3+28x^4-10x^5-x^6+x^7)}
          {(1+x+x^2)\,(1-x)^7}, \label{eq:c1}\\
  F_2(x) &\overset{?}{=}
     \frac{x^2\,(1+6x+32x^2+75x^3+78x^4+49x^5+4x^6-3x^7-3x^8+x^9)}
          {(x^2+x+1)\,(x+1)^2\,(1-x)^7}, \label{eq:c2}\\
  F_3(x) &\overset{?}{=}
     \frac{2x^3\,(1+9x+39x^2+28x^3+3x^4-21x^5-5x^6+6x^7)}
          {(1+x+x^2)\,(1-x)^7}. \label{eq:c3}
\end{align}
When this paper was written the three entries stood at revisions $19$, $14$ and
$15$, unedited since that date --- no proof, no reference and no upgraded
formula had been added to any of them.

\subsection{Results}

Write $\ind{P} = 1$ if $P$ holds and $0$ otherwise.

\begin{theorem}\label{thm:730}
For $n \ge 2$,
\[
  \texttt{A397730}(n)
   = \frac{n^6}{18} - \frac{2n^4}{3} + \frac{n^3}{2} + \frac{7n^2}{3}
     - 3n + \frac{7}{9} + \frac{2}{9}\ind{3 \mid n},
\]
and $\texttt{A397730}(1) = 0$. Consequently the conjecture \eqref{eq:c1} holds.
\end{theorem}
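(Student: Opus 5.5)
By Burnside's lemma for $C_3=\{1,\rho,\rho^2\}$,
\[
\texttt{A397730}(n)=\tfrac13\bigl(N(n)+2\,\fix(\rho)\bigr),
\]
where $N(n)$ is the number of independent $3$-sets of $\Hs$ and $\fix(\rho)=\fix(\rho^2)$ is the number of such sets invariant under $\rho$. The plan is to compute both terms exactly as quasi-polynomials in $n$, assemble them, and then verify the generating function identity. That identity is mechanical: it suffices to check that the quasi-polynomial times $(1+x+x^2)(1-x)^7$ is a polynomial matching Irvine's numerator. Since the quasi-polynomial has degree $6$ and period $3$, agreement of finitely many initial terms (say $n\le 12$, including the exceptional value at $n=1$) settles it.

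For $N(n)$ I would use inclusion--exclusion over the edges among three chosen cells:
\[
N=\binom{n^2}{3}-|E|\,(n^2-2)+P-T.
\]
Here $|E|$ is the number of edges of $\Hs$, $P=\sum_v\binom{\deg v}{2}$ counts paths of length two, and $T$ counts triangles.
\begin{itemize}
\item $\Hs$ is bipartite: every edge joins an up-cell to a down-cell. Hence $T=0$.
\item Each down-cell $\dn(i,j)$ has exactly three up-neighbours, $\up(i+1,j)$, $\up(i,j+1)$ and $\up(i,j)$. So $|E|=3\binom n2$, and every down-cell contributes $\binom32=3$ to $P$.
\item An up-cell has degree $3$ minus the number of its edges lying on the boundary of $\Tn$. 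There are three corner cells of degree $1$ (for $n\ge2$), $3(n-2)$ non-corner boundary up-cells of degree $2$, and the remaining $\binom{n+1}{2}-3n+3$ up-cells have degree $3$.
\end{itemize}
Summing gives $P$ as a polynomial, so $N(n)$ is an honest polynomial of degree $6$ with leading term $n^6/6$, consistent with the $n^6/18$ in the statement. These lattice counts in $\Un$ and $\Dn$ are routine.

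For $\fix(\rho)$: a $\rho$-invariant $3$-set is either a single orbit of size $3$, or contains a $\rho$-fixed cell. A fixed cell exists only when $3\mid n$ (then it is the central down-cell) or $n\equiv 1\pmod 3$ (the central up-cell). If there is a fixed cell $c$, the set consists of $c$ plus two more cells forming a $\rho$-invariant set, which is impossible since the only orbits have size $1$ or $3$. So every invariant set is a single free orbit $\{c,\rho c,\rho^2 c\}$. The number of free orbits is $(n^2-\ind{3\nmid n\text{ or } \dots})/3$; more precisely it is $(n^2-f)/3$, where $f\in\{0,1\}$ is the number of fixed cells, so $f=1$ unless $n\equiv2\pmod3$. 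From these we must subtract the orbits whose three cells are pairwise side-adjacent. Because $\rho$ permutes the cells transitively within an orbit, one adjacent pair forces all three pairs to be adjacent, which is a triangle and cannot occur in a bipartite graph. Hence every free orbit is independent, and $\fix(\rho)=(n^2-f)/3$.

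Assembling, $\texttt{A397730}(n)=N(n)/3+2(n^2-f)/9$. The main obstacle I expect is precisely this fixed-point bookkeeping: getting the residue dependence right so that it collapses to the single term $\frac29\ind{3\mid n}$. This must be reconciled with $f$, which is nonzero for both $n\equiv0$ and $n\equiv1\pmod 3$. I would check directly which cell is central via \eqref{eq:group}: the centroid $(n/3,n/3)$ lies in $\up(i,j)$ or $\dn(i,j)$ according to $n\bmod 3$, and when $n\equiv2\pmod3$ it lies on no cell interior but at a vertex. I would then verify against small values ($n=2,3,4$) by hand, adjusting constants if the degree count at small $n$ (e.g. $n=2$, where corner cells interact) needs care; this explains the restriction $n\ge2$ and the separate value at $n=1$.
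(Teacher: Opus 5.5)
Your route is the paper's own: Burnside over $C_3$, the inclusion--exclusion of Lemma~\ref{lem:i3} with $E=3\binom n2$, $P_2=3(n-1)^2$ (your up-cell degree census is right and sums to this) and $T=0$, and the reduction of $\fix_\rho$ to counting free orbits, all of which are independent. Your argument for that last point is valid and even avoids the orientation-preservation fact the paper uses: $c\sim\rho c$ would make $c,\rho c,\rho^2c$ a triangle in a bipartite graph.

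The genuine error is in the classification of the $\rho$-fixed cells, where you have swapped the residues $0$ and $2$. From $\rho(\dn(i,j))=\dn(j,n-2-i-j)$, a down-cell is fixed iff $i=j$ and $3i=n-2$, that is, iff $n\equiv 2\pmod 3$. An up-cell is fixed iff $i=j$ and $3i=n-1$. When $3\mid n$ neither equation is solvable, so nothing is fixed; the centre $(n/3,n/3)$ is then a lattice vertex, not a centroid. Your $f=\ind{n\not\equiv 2 \pmod 3}$ already fails at $n=2$, where the single down-cell of $T_2$ is visibly fixed. It also makes $(n^2-f)/3$ a non-integer whenever $n\equiv 0$ or $2\pmod 3$. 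The congruence $f\equiv n^2\pmod 3$ alone forces $f=\ind{3\nmid n}$.

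With the correct $f=1-\ind{3\mid n}$ the assembly reads $N(n)/3+\tfrac29(n^2-1)+\tfrac29\ind{3\mid n}$, which is exactly the stated quasi-polynomial, so the tension you noticed disappears. With your $f$ you would instead get the periodic term $\tfrac29\ind{n\equiv 2\pmod 3}$ and, for example, the value $11/9$ at $n=2$. Your proposed remedy misdiagnoses the problem. Checking $n=2,3,4$ and adjusting constants on the suspicion that the small-$n$ degree count is off will not help: the degree census is correct for every $n\ge 2$, and fitting constants to data is not a proof.

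A smaller point: agreement for $n\le 12$ does settle \eqref{eq:c1}, but not merely because the quasi-polynomial has degree $6$ and period $3$. You need that its periodic part is only $c\,\ind{3\mid n}$, so that its generating function is $R(x)/\bigl((1+x+x^2)(1-x)^7\bigr)$ with $\deg R\le 10$. For a general period-$3$ quasi-polynomial of degree $6$ the denominator is $(1-x^3)^7$, and you would need terms up to $n=22$, which is the bound the paper's certificate uses.
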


\begin{theorem}\label{thm:731}
For $n \ge 2$,
\[
  \texttt{A397731}(n)
   = \frac{n^6}{36} - \frac{n^4}{3} + \frac{7n^3}{12} + \frac{5n^2}{12}
     - \frac{4n}{3} + \frac{8}{9}
     + \Bigl(\frac{n}{4} - \frac12\Bigr)\ind{n \text{ odd}}
     + \frac19 \ind{3 \mid n},
\]
and $\texttt{A397731}(1) = 0$. Consequently the conjecture \eqref{eq:c2} holds.
\end{theorem}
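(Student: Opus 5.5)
Theorem~\ref{thm:731} follows from Burnside's lemma for $D_3$ acting on the independent $3$-sets of $\Hs$:
\[
  \texttt{A397731}(n) = \tfrac16\bigl(N_e + 2N_\rho + 3N_\sigma\bigr),
\]
where $N_e$ is the total number of independent $3$-sets, $N_\rho$ the number fixed by $\rho$, and $N_\sigma$ the number fixed by $\sigma$. The three reflections are conjugate, so they contribute equally. The quantities $N_e$ and $N_\rho$ are exactly the inputs to Theorem~\ref{thm:730}, which gives $\texttt{A397730}(n) = \tfrac13(N_e + 2N_\rho)$. We compute them the same way here and reuse them. The genuinely new work is $N_\sigma$, the count of reflection-symmetric independent triples.

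\textbf{The count $N_e$.} We use inclusion--exclusion on the edges of $\Hs$. Let $m$ be the number of edges, $P$ the number of paths of length two, and $t$ the number of triangles. Then
\[
  N_e = \binom{n^2}{3} - m(n^2-2) + P - t .
\]
\begin{itemize}
  \item Every edge joins an up-cell to a down-cell, so $\Hs$ is bipartite and $t = 0$.
  \item Each down-cell $d(i,j)$ has exactly three up-neighbours, $u(i,j)$, $u(i+1,j)$ and $u(i,j+1)$. Hence $m = 3\binom n2$.
  \item $P = \sum_v \binom{\deg v}{2}$. Down-cells have degree $3$. An up-cell has degree $3$ in the interior, $2$ on a side away from the corners, and $1$ at a corner. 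This gives an explicit polynomial $P$.
\end{itemize}

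\textbf{The count $N_\rho$.} A $\rho$-fixed $3$-set is a single $\rho$-orbit of size three; a fixed cell exists only when $3\mid n$. We count orbits of cells, giving $(n^2-\ind{3\mid n})/3$, and subtract those orbits whose three cells are pairwise side-adjacent. Such an orbit needs its cells near the centre, so this is possible only for tiny configurations, and it should produce the $\ind{3\mid n}$ correction.

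\textbf{The count $N_\sigma$, the main obstacle.} Under $\sigma(x,y)=(y,x)$ we have $\sigma(u(i,j)) = u(j,i)$ and $\sigma(d(i,j)) = d(j,i)$, so the fixed cells are the $u(i,i)$ and the $d(i,i)$ on the altitude. Their number is $n$ (it is $\lceil n/2\rceil + \lfloor n/2\rfloor$). A $\sigma$-fixed $3$-set consists of one fixed cell $c$ together with one swapped pair $\{a,\sigma a\}$, with $a$ off the axis, subject to three conditions:
\begin{itemize}
  \item $a$ is not adjacent to $\sigma a$;
  \item $a$ is not adjacent to $c$ (then $\sigma a$ is not adjacent to $c$ automatically, by symmetry).
\end{itemize}
The number of swapped pairs is $(n^2-n)/2$.

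From the pair count we subtract the pairs in which $a$ is side-adjacent to its own mirror image. A down-cell $d(i,i)$ on the axis is fixed, so mirror-adjacent pairs occur only when $a$ is a cell just off the axis that shares an edge lying across the axis; these are roughly $n$ pairs, and the exact count needs care with the parity of positions. Next, for each fixed cell $c$ we subtract the pairs with $a$ adjacent to $c$. Every neighbour of an axis cell is either on the axis or one step off it, so each $c$ loses about one pair, except near the corner and the bottom edge.

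In each of these steps the answer depends on the parity of $n$, because the bottom row of the grid meets the axis in an up-cell or a down-cell according to parity. This is where the term $\bigl(\tfrac n4-\tfrac12\bigr)\ind{n\text{ odd}}$ should come from. I expect most of the effort to go into getting these boundary cases right, checking small $n$ by hand.

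\textbf{Assembly.} Combine the terms as $\tfrac16(N_e+2N_\rho+3N_\sigma)$ and simplify to the stated quasi-polynomial. Verify $n=1$ directly: there is only one cell, so the value is $0$. Finally, convert the quasi-polynomial, of period $6$ and degree $6$, into a rational generating function with denominator $(1-x)^7(1+x)^2(1+x+x^2)$. Comparing enough initial terms (about $18$) with \eqref{eq:c2} then proves the conjecture, since both sides satisfy the same linear recurrence.
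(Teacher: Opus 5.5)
The skeleton (Burnside over $D_3$, inclusion--exclusion for $N_e$) is the paper's, and your $N_e$ is correct. But $N_\sigma$ and $N_\rho$ are set up wrongly, and as written the assembly cannot return the stated formula.

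The most serious problem is a missing case in $N_\sigma$. Because $\sigma$ is an involution, a $\sigma$-invariant $3$-set is either one axis cell plus a mirror pair \emph{or three axis cells}, and you drop the second kind. On the axis, $\Hs$ induces only the matching $\{\up(i,i),\dn(i,i)\}$; the other two side-neighbours $\up(i+1,i)$, $\up(i,i+1)$ of $\dn(i,i)$ are off-axis. So a triple of axis cells is dependent exactly when it contains one of these $\lfloor n/2\rfloor$ edges, and this case contributes $\binom{n}{3}-\lfloor n/2\rfloor(n-2)$, which is nonzero from $n=5$ on. That term is the whole source of the period-$2$ part: writing $\lfloor n/2\rfloor=(n-\ind{n\text{ odd}})/2$, its contribution to $\tfrac36N_\sigma$ has parity-dependent part exactly $\bigl(\tfrac n4-\tfrac12\bigr)\ind{n\text{ odd}}$. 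It also supplies an $\tfrac{n^3}{12}$, without which your $n^3$ coefficient would be $\tfrac12$ instead of $\tfrac7{12}$. The mirror-pair case has no parity dependence at all, so the place where you plan to find the parity term cannot produce it.

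The second problem is that you observed $\Hs$ is bipartite but never combined this with the fact that $\rho$ and $\sigma$ preserve orientation: $\rho(\up(i,j))=\up(j,n-1-i-j)$, $\sigma(\up(i,j))=\up(j,i)$, and likewise for down-cells. Hence no cell is ever side-adjacent to its $\rho$- or $\sigma$-image. So no rotation orbit needs to be subtracted; three pairwise side-adjacent cells would in any case form a triangle in a graph you already know is triangle-free. Likewise no mirror pair is internally adjacent: the number is $0$, not roughly $n$ (you are thinking of point-adjacency, where it is $n-1$). The only condition on a mirror pair is non-adjacency to the chosen axis cell. That excludes exactly one pair for each axis cell other than $\up(0,0)$, with no exception at the bottom edge, so this case gives $\tfrac n2(n^2-n)-(n-1)$.

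Separately, your fixed-cell condition for $\rho$ is reversed. $\rho$ fixes a cell exactly when $3\nmid n$, because the centre $(n/3,n/3)$ is then the centroid of $\up(m,m)$ or $\dn(m,m)$. For $3\mid n$ the centre is a lattice vertex and no cell is fixed. Thus $N_\rho=(n^2-1+\ind{3\mid n})/3$; your $(n^2-\ind{3\mid n})/3$ is not even an integer at $n=2$. The $\tfrac19\ind{3\mid n}$ term comes from $\tfrac26N_\rho$, not from discarded orbits.

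With this $N_\rho$ and $N_\sigma=\binom n3-\lfloor n/2\rfloor(n-2)+\tfrac12n^2(n-1)-(n-1)$, the assembly does give the stated formula. Your recurrence-plus-initial-terms argument is a valid substitute for the paper's polynomial certificate in the last step.
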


\begin{theorem}\label{thm:732}
For $n \ge 2$,
\[
  \texttt{A397732}(n)
   = \frac{n^6}{18} - \frac{13n^4}{6} + 4n^3 + \frac{62n^2}{3}
     - 73n + \frac{562}{9} - \frac49 \ind{3 \mid n},
\]
and $\texttt{A397732}(1) = 0$. Consequently the conjecture \eqref{eq:c3} holds.
\end{theorem}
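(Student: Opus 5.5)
We prove Theorem~\ref{thm:732} with Burnside's lemma for $C_3=\{1,\rho,\rho^2\}$ acting on the independent $3$-sets of $\Hp$:
\[
  \texttt{A397732}(n)=\tfrac13\bigl(I_3(\Hp)+2\fix(\rho)\bigr),
\]
where $I_3(\Hp)$ is the number of independent $3$-sets. Since $\rho^2=\rho^{-1}$, the two rotations have the same number of fixed sets.

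\textbf{The labeled count $I_3(\Hp)$.} We use inclusion--exclusion on the edges inside a $3$-set:
\[
  I_3=\binom{n^2}{3}-|E|(n^2-2)+P_2-t .
\]
Here $|E|$ is the number of edges of $\Hp$, $P_2=\sum_v\binom{\deg v}{2}$ counts paths with two edges, and $t$ is the number of triangles. This holds because a $3$-set spanning $k$ edges is counted by the correction terms with net weight $1$ exactly when $k\ge1$.

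All three quantities are lattice counts over $\Un\cup\Dn$, with the cells classified by their position.
\begin{itemize}
  \item An interior up-cell shares vertices with $12$ other cells. Cells touching the boundary have fewer neighbours, determined by how many of their vertices lie on each side of $\Tn$ or at a corner.
  \item For each fixed shape of boundary contact, the degrees are polynomial in $n$, and so are the counts of cells of that shape. Hence $|E|$ and $P_2$ are polynomials in $n$, once $n$ is larger than a small threshold.
  \item For the triangles we use the structural description. Three pairwise point-adjacent cells either share a common vertex, or form a configuration inside a single hexagon-like cluster: three cells arranged around a unit triangle, each pair meeting at a vertex. We count the common-vertex triples as $\sum_{v\in\Vn}\binom{c(v)}{3}$, where $c(v)\in\{1,2,3,6\}$ is the number of cells at $v$. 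The remaining triples are enumerated by a finite local list of shapes, each placed at polynomially many positions. Getting this list exactly right, with no double counting of triples that both share a vertex and form such a configuration, is where I expect the main difficulty.
\end{itemize}

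\textbf{The fixed points $\fix(\rho)$.} A $\rho$-invariant $3$-set is either a single $\rho$-orbit of size $3$, or consists of three $\rho$-fixed cells. The only possible $\rho$-fixed cell is the central cell, which exists only when $3\mid n$, and then there is exactly one. So the second case contributes nothing, and $\fix(\rho)$ is the number of cells $c$ whose orbit $\{c,\rho c,\rho^2c\}$ has three pairwise non-point-adjacent members. Such a cell must be far enough from the centre. Using \eqref{eq:group} we write $\rho$ on $\up(i,j)$ and $\dn(i,j)$ explicitly, and find that $c$ meets $\rho c$ only within a bounded neighbourhood of the centre. So
\[
  \fix(\rho)=\frac{n^2-\ind{3\mid n}}{3}\cdot 3\big/3-(\text{bounded correction depending on } n\bmod 3),
\]
more precisely $(n^2-\ind{3\mid n})/3$ orbits minus the orbits near the centre that fail. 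These failing orbits we enumerate by hand for each residue of $n$ mod $3$.

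\textbf{Assembling the formula.} Combining the two counts gives a quasi-polynomial of degree $6$ and period $3$, valid for $n$ beyond the threshold where the local boundary and centre cases stabilise. We compare it with the displayed formula and check the small values of $n$ directly, including $\texttt{A397732}(1)=0$. Finally, a quasi-polynomial of degree $6$ and period $3$ has a rational generating function with denominator $(1-x^3)(1-x)^6=(1+x+x^2)(1-x)^7$. Its numerator is therefore fixed by finitely many initial terms, and checking those terms against \eqref{eq:c3} confirms the conjecture.
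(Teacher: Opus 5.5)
Your outline follows the paper's route: Burnside over $C_3$, inclusion--exclusion for the labeled count with triangles split into common-vertex triples and the three cells across the edges of a cell, and the rotation term as free orbits minus a few bad orbits at the centre. But the rotation term is set up wrongly, and that is exactly the term that produces the periodic correction $-\frac49\ind{3\mid n}$.

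\textbf{The fixed cell is inverted.} The centre $O=(n/3,n/3)$ is a lattice vertex exactly when $3\mid n$. In that case $\rho$ fixes \emph{no} cell: a fixed cell's centroid would be fixed by $\rho$, hence equal to $O$, but a centroid is never a vertex. When $3\nmid n$, $O$ is the centroid of $\up(m,m)$ for $n=3m+1$, or of $\dn(m,m)$ for $n=3m+2$, and that cell is fixed. Your count $(n^2-\ind{3\mid n})/3$ is an integer for no $n$ (it equals $4/3$ at $n=2$). The correct number of free orbits is $(n^2-\ind{3\nmid n})/3$.

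\textbf{The bad orbits change accordingly.} For $3\mid n$ there are two: the three up-cells and the three down-cells at the vertex $O$. For $3\nmid n$, $n\ge 2$, there is one: the three cells across the edges of the fixed cell. Hence $\fix_\rho=n^2/3-2$ or $(n^2-1)/3-1$. The difference between the two cases is $-\frac23$, and $\frac23$ of it is the $-\frac49$ in the theorem.

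\textbf{Locality is not argued.} You give no reason why bad cells are confined to the centre. Your index idea works if carried out. Since $\rho$ preserves orientation, Lemma~\ref{lem:nbrtypes} requires $(j-i,\,n-1-i-2j)$, respectively $(j-i,\,n-2-i-2j)$, to lie in $\{\pm(1,0),\pm(0,1),\pm(1,-1)\}$. Each of these twelve linear systems has one rational solution, integral for only one residue of $n$ mod $3$; you must then check that the cell lies in $\Tn$, which fails only at $n=1$. The paper argues metrically instead. A common vertex $v$ of $c$ and $\rho c$ has $v,\rho^{-1}v\in c$, and $|v-\rho^{-1}v|=\sqrt3\,|v-O|$, so $v=O$ or $|v-O|=1/\sqrt3$. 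Over the triangular lattice this forces $v=O$ when $3\mid n$, and $v$ a vertex of the central cell otherwise.

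\textbf{The triangle dichotomy needs a proof, and it removes your double-counting worry.} Suppose three pairwise point-adjacent cells share no vertex. No two can share an edge: if $c_1,c_2$ shared $\{x,y\}$, then $c_3$, avoiding $x$ and $y$, would have to contain both apexes over that edge, which are $\sqrt3$ apart. So the three pairwise intersections are distinct single vertices at mutual distance $1$, and they form a cell $w$. Each $c_i\ne w$ contains an edge of $w$, so it is the cell across that edge. Such triples have no common vertex, so the two families are disjoint. The second family contributes one triangle per cell whose three side-neighbours lie in $\Tn$, namely $\binom n2+\binom{n-2}2$.

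\textbf{The generating-function step overstates a general fact.} It is false that every degree-$6$, period-$3$ quasi-polynomial has denominator $(1-x^3)(1-x)^6$; periodic higher coefficients need higher powers of $1-x^3$. The claim holds here only because the periodic part sits in the constant term. Even then, you need an explicit degree bound on the numerator before matching finitely many terms suffices.
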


As far as we can tell these closed forms are the first on record. They are
equivalent to the generating functions \eqref{eq:c1}--\eqref{eq:c3}, and the
passage from one to the other is an exact finite computation that
Section~\ref{sec:assembly} spells out.

The four combinations
$\{C_3, D_3\} \times \{\text{side-adjacency}, \text{point-adjacency}\}$ form a
natural $2 \times 2$ table, and the three sequences above fill three of its
cells. The fourth --- $D_3$-orbits of point-independent triples --- is
\texttt{A289230}, contributed by Heinrich Ludwig in June 2017 under a rather
different description, together with its labeled companion \texttt{A289224}.
Both carry a closed form. Neither carries a proof or a reference, though, and
the programs listed there evaluate the recorded formula rather than derive it;
both entries are marked \texttt{easy}. Our machinery covers that corner too.

\begin{corollary}\label{cor:ludwig}
The closed forms recorded at \texttt{A289224} and \texttt{A289230}
\cite{A289224,A289230} are correct. Explicitly, for $n \ge 2$ the number of
point-independent $3$-sets of cells of $\Tn$ is
\[
  \frac{n^6}{6} - \frac{13n^4}{2} + 12n^3 + \frac{184n^2}{3} - 219n + 190
  \; = \; \texttt{A289224}(n+1),
\]
and the number of $D_3$-orbits of such sets is $\texttt{A289230}(n+1)$.
\end{corollary}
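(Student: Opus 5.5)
The corollary has two parts, and I will prove them in order. First I compute the labeled count $N_p(n)$ of point-independent $3$-sets of $\Tn$. Then I obtain the $D_3$-orbit count by Burnside's lemma, reusing the fixed-point counts already needed for Theorem~\ref{thm:732}.

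For the labeled count I use inclusion--exclusion over the edges of $\Hp$ among the three chosen cells:
\[
  N_p(n) = \binom{n^2}{3} - e(\Hp)\,(n^2-2) + P_2(\Hp) - t(\Hp).
\]
Here $e$ is the number of edges, $P_2=\sum_v \binom{\deg v}{2}$ counts paths of length two (cherries), and $t$ is the number of triangles. This works because a $3$-set spanning $k$ edges is counted $1-k+\binom{k}{2}-\binom{k}{3}$ times, which is $1$ when $k=0$ and $0$ otherwise.

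Degrees in $\Hp$ are determined by cell type and position. An interior up- or down-cell meets $12$ others. Cells in a boundary row lose a bounded number of neighbours, and the three corner cells lose more. So $e$ and $P_2$ are obtained by summing piecewise-constant functions over $\Un$ and $\Dn$. Each summand is constant on the interior, on the three boundary strips and at the corners, so both sums are polynomials in $n$ valid for $n$ large enough, say $n\ge 3$. Small $n$ are handled by direct checking.

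For $t(\Hp)$ I use the structural description of triangles of $\Hp$ that the paper announces. Pairwise point-adjacent triples of cells either share a common vertex, or form a configuration around a small region, such as three cells inside a side-$2$ or side-$3$ triangle. I count the first kind vertex by vertex as $\sum_{x\in\Vn}\binom{c(x)}{3}$, where $c(x)\in\{1,2,3,6\}$ is the number of cells containing $x$. I count the remaining kinds by counting translates of each bounded template inside $\Tn$. Every such count is again a lattice-point count in a shrunken copy of $\Un$ or $\Dn$, hence a polynomial in $n$ for $n\ge 2$ or $3$. Substituting and expanding should give the sextic $\frac{n^6}{6}-\frac{13n^4}{2}+\cdots+190$. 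I then compare this with the formula recorded at \texttt{A289224}, shifting the index by one because Ludwig indexes by the number of vertices per side. Equality of two polynomials is a finite check.

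For the orbit count, Burnside gives
\[
  \texttt{A289230}(n+1)=\tfrac16\bigl(N_p + 2\fix(\rho) + 3\fix(\sigma)\bigr).
\]
The rotation term $\fix(\rho)$ is already computed in the proof of Theorem~\ref{thm:732}. Indeed, that theorem's formula equals $\frac13(N_p+2\fix(\rho))$, so $\fix(\rho)$ can even be read off once $N_p$ is known. It is nonzero only when $3\mid n$, where the fixed sets are $\rho$-orbits of size $3$ of cells not containing the centre.

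The new ingredient is $\fix(\sigma)$. A $\sigma$-invariant $3$-set consists of one cell on the axis $x=y$ together with a mirror pair $\{c,\sigma c\}$ of off-axis cells. The cells on the axis are $\up(i,i)$ and $\dn(i,i)$. For a fixed axis cell $a$, I count the off-axis cells $c$ with $c$ non-adjacent to $a$ and $c$ non-adjacent to $\sigma c$. The second condition says $c$ lies at a bounded distance from the axis; in oblique coordinates this is $|i-j|$ greater than a small constant. This makes $\fix(\sigma)$ a lattice-point count in a region cut out by $|i-j|$ and by the distance to $a$, summed over $a$. The answer depends on the parity of $n$ through where the axis meets the cells, and this is the source of the $\ind{n\text{ odd}}$ term in the recorded formula.

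The main obstacle is the triangle count $t(\Hp)$. The local configurations must be enumerated correctly without double counting triples that share a vertex and also fit a template. I would first classify them by the diameter of the union of the three cells, and check the result against a brute-force count for $n\le 8$. The parity case analysis in $\fix(\sigma)$ is routine but fiddly. I would verify it the same way before assembling the final formula and comparing it with \texttt{A289230}.
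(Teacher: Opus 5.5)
Your skeleton matches the paper's: Lemma~\ref{lem:i3} for the labeled count, then Burnside over $D_3$ with the rotation term taken from Theorem~\ref{thm:732}. The reflection term as you set it up gives the wrong answer, though. You assert that a $\sigma$-invariant $3$-set is one axis cell plus a mirror pair. This drops the case of three axis cells ($3 = 3+0 = 1+2$, Lemma~\ref{lem:mirrorpairs}), and that case is not empty in $\Hp$. The axis cells induce the path $c_1c_2\cdots c_n$ (Lemma~\ref{lem:axis-point}), which has $\binom{n-2}{3}$ independent $3$-subsets, e.g.\ $\{\up(0,0),\up(1,1),\up(2,2)\}$ for $n=5$. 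Without this case your $\fix_\sigma(\Hp)$ is short by $\binom{n-2}{3}$, and the Burnside average by $\tfrac12\binom{n-2}{3}$. At $n=5$ you would get $759/6$, not even an integer, instead of $762/6=127$. The correct term is $\binom{n-2}{3}+nG-X$. Here $G=(n^2-n)/2-(n-1)$ is the number of internally independent mirror pairs, namely those with $|i-j|\ge 2$; your phrase ``bounded distance from the axis'' states this backwards. And $X=2n-5+\ind{n \text{ even}}$ is the number of pairs $(a,P)$ with $a$ an axis cell point-adjacent to such a pair $P$.

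Your description of the rotation term is also wrong. By Lemma~\ref{lem:fixedcells} every $\rho$-invariant $3$-set is a free orbit, and there are $(n^2-\varepsilon_n)/3$ of them for every $n$. For $n\ge 2$ only $B(n)\in\{1,2\}$ of them fail to be independent: the two orbits of cells through $O$ when $3\mid n$, and the three cells across the edges of the fixed cell $z$ when $3\nmid n$. So $\fix_\rho(\Hp)$ is nonzero for every $n\ge 3$ (it is $4$ at $n=4$), not only when $3\mid n$. Reading it off Theorem~\ref{thm:732} as $\bigl(3\,\texttt{A397732}(n)-N_p\bigr)/2$ is legitimate and repairs this step, provided you discard the characterisation you gave.

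The other gap is the triangle count. You rightly flag it as the crux, but you leave it as a hedge (``side-$2$ or side-$3$'' templates, to be confirmed by brute force for $n\le 8$). Agreement for small $n$ proves nothing for all $n$ unless you also prove the template list is complete, and that completeness is exactly the content of Lemma~\ref{lem:triangles}. The argument is short.
\begin{itemize}
\item Suppose three pairwise point-adjacent cells have no common vertex. Then no two of them share an edge: otherwise the third cell would have to contain both apexes over that edge, which are at distance $\sqrt3$.
\item So the three pairwise intersections are single vertices. They are distinct and pairwise at distance $1$, hence they are the vertices of a cell $w$ of $\Tn$.
\item Each $c_i$ differs from $w$ and contains an edge of $w$, so the triple is the three cells across the edges of $w$.
\end{itemize}
There is no side-$3$ template. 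Kinds (A) and (B) are disjoint by definition, so no double counting arises. Kind (B) contributes $W(n)=\binom{n}{2}+\binom{n-2}{2}$: every down-cell has all three side-neighbours, while an up-cell needs $i,j\ge 1$ and $i+j\le n-2$. This gives $T(\Hp)=3(n-1)+10(n-1)(n-2)+W(n)$. A minor point: no vertex lies in exactly two cells, so $c(x)\in\{1,3,6\}$; this is harmless, since $\binom{2}{3}=0$.
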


\begin{remark}
The index shift is a change of convention, not of object: Ludwig counts in an
$n \times n \times n$ \emph{point} grid, whose cells form $T_{n-1}$.
\end{remark}

\subsection{Method, and what is new}

Everything rests on Burnside's lemma. For $G \in \{C_3, D_3\}$ acting on the
cells of $\Tn$ and $H \in \{\Hs, \Hp\}$,
\begin{equation}\label{eq:burnside}
  \#\{\text{$G$-orbits of independent $3$-sets of } H\}
   = \frac{1}{|G|} \sum_{g \in G} \fix_g,
\end{equation}
where $\fix_g$ counts the independent $3$-sets $S$ with $gS = S$. The three
reflections in $D_3$ are conjugate, as are $\rho$ and $\rho^2$, so the three
theorems rest on only five quantities: $\fix_{\mathrm{id}}$ and $\fix_\rho$ for
each of the two graphs, plus $\fix_\sigma$ for $\Hs$. A sixth, $\fix_\sigma$ for
$\Hp$, enters only in Corollary~\ref{cor:ludwig}.

Two structural facts do most of the work. The first is that $\rho$ and $\sigma$
\emph{preserve the orientation of cells} (Lemma~\ref{lem:action}), whereas
side-adjacency joins cells of \emph{opposite} orientation
(Lemma~\ref{lem:bipartite}). So $\Hs$ is bipartite --- its triangle count
vanishes --- and no cell is ever side-adjacent to its own image under $\rho$ or
$\sigma$. That one observation collapses the rotation term for $\Hs$ to a count
of orbits, and it removes the internal condition on a mirror pair as well. Two
savings, one line of bookkeeping.

The other fact is about the triangles of $\Hp$, which split into exactly two
structural families (Lemma~\ref{lem:triangles}): triples with a common vertex,
and the triples made up of the three cells across the three edges of a single
cell. Nothing here is fitted or interpolated. Both families are counted by
elementary lattice censuses.

The rotation term for $\Hp$ needs a locality statement, and here the geometry is
unusually sharp: if a cell meets its own $\rho$-image, then the meeting point is
at distance \emph{exactly} $1/\sqrt3$ from the centre of $\Tn$, or is the centre
itself (Lemma~\ref{lem:locality}). Over the triangular lattice the inequality
$p^2 + pq + q^2 \le \tfrac13$ then pins down the possibilities completely
(Lemma~\ref{lem:shell}). The point is that the correction is a bounded and
explicitly identified set of orbits, not an empirical table.

\subsection{Related work}

The unrestricted orbit counts are known: \texttt{A396285}$(n)
= \tfrac13\bigl(\binom{n^2}{3} + 2\lfloor n^2/3 \rfloor\bigr)$ counts
$C_3$-orbits of arbitrary cell triples, and \texttt{A396761} does the same for
$D_3$ \cite{A396285,A396761}. Our fixed-point lemmas reproduce both, and that is
a useful calibration. The adjacency conditions only remove $O(n^4)$ of the
$\Theta(n^6)$ triples, so each of our answers has to be a degree-$6$
quasi-polynomial with leading coefficient $\tfrac{1}{18}$, $\tfrac{1}{36}$,
$\tfrac{1}{18}$ respectively; the conjectured generating functions agree.

The point-adjacency case has a nine-year-old precursor. Ludwig's family
\texttt{A289222}--\texttt{A289232} counts selections of $k$ pairwise disjoint
point triples from a triangular point grid, each triple forming a small
triangle; for $k = 3$ this is exactly our point-independence condition. The
family has two conventions only --- all selections counted separately, or
counted up to rotations \emph{and} reflections --- so it contains no
rotation-only member, and nothing in it concerns side-adjacency. In particular
it does not contain \texttt{A397730}, \texttt{A397731} or \texttt{A397732}. What
it does contain is \texttt{A289224}, the labeled point-independent count, which
is our $\fix_{\mathrm{id}}$ for $\Hp$. The formula recorded there is stated
without proof, so Theorem~\ref{thm:732} is not a corollary of it; the two are
proved together here. Colin Barker's generating function on
\texttt{A289230} is the rational form of Ludwig's quasi-polynomial and is
likewise unproved.

Counting independent sets in triangular-grid and king graphs is of course a
well-developed subject, usually by transfer matrices on strips, and usually for
independent sets of all sizes at once; see for instance
\cite{CalkinWilf, Euler}. That literature does not address $3$-sets in the full
triangle $\Tn$, still less their orbits under $C_3$ or $D_3$. The
inclusion--exclusion identity for independent $3$-sets that we use
(Lemma~\ref{lem:i3}) is standard, and a systematic treatment of counting
fixed-size independent sets is in \cite{Vatter}.

\section{Preliminaries}

\begin{lemma}\label{lem:action}
On cells, the maps \eqref{eq:group} act by
\[
  \rho\bigl(\up(i,j)\bigr) = \up(j,\, n-1-i-j), \qquad
  \rho\bigl(\dn(i,j)\bigr) = \dn(j,\, n-2-i-j),
\]
\[
  \sigma\bigl(\up(i,j)\bigr) = \up(j,i), \qquad
  \sigma\bigl(\dn(i,j)\bigr) = \dn(j,i).
\]
In particular both maps send up-cells to up-cells and down-cells to down-cells.
\end{lemma}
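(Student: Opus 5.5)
Since a cell is identified with its set of three vertices, and $\rho$ and $\sigma$ are bijections of $\Vn$, the image of a cell is the cell whose vertex set is the image of the original vertex set. So the plan is to apply the coordinate formulas \eqref{eq:group} to the three vertices listed in \eqref{eq:cells}. Then we check that the resulting triple is exactly $\up(\cdot,\cdot)$ or $\dn(\cdot,\cdot)$ for the stated parameters, and that those parameters lie in $\Un$ or $\Dn$ respectively.

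\emph{Reflection.} We have $\sigma$ sending $(i,j),(i+1,j),(i,j+1)$ to $(j,i),(j,i+1),(j+1,i)$. This is $\up(j,i)$ with the last two vertices listed in the other order. Similarly $\sigma(\dn(i,j))=\{(j,i+1),(j+1,i),(j+1,i+1)\}=\dn(j,i)$. The conditions $i+j\le n-1$ and $i+j\le n-2$ are symmetric in $i,j$, so membership in $\Un$ and $\Dn$ is preserved.

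\emph{Rotation, up-cells.} Put $m=n-1-i-j\ge 0$. Then $\rho(i,j)=(j,n-i-j)=(j,m+1)$, $\rho(i+1,j)=(j,m)$ and $\rho(i,j+1)=(j+1,m)$. As a set this is $\{(j,m),(j+1,m),(j,m+1)\}=\up(j,m)$. Moreover $j+m=n-1-i\le n-1$ with both coordinates nonnegative, so $(j,m)\in\Un$.

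\emph{Rotation, down-cells.} Put $m=n-2-i-j\ge0$. Then $\rho(i+1,j)=(j,m+1)$, $\rho(i,j+1)=(j+1,m)$ and $\rho(i+1,j+1)=(j+1,m+1)$. This is exactly $\dn(j,m)$, and $j+m=n-2-i\le n-2$, so $(j,m)\in\Dn$.

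The final assertion is then immediate, because every image is again of the form $\up(\cdot,\cdot)$ or $\dn(\cdot,\cdot)$ respectively. There is no genuine obstacle here. The only care needed is the bookkeeping of which vertex maps to which, and verifying that the new parameters stay in the index triangles. One might also note that the vertex sets $\up(i,j)$ and $\dn(i',j')$ are never equal, since an up-cell set has two points with minimal coordinate sum and a down-cell set has one. This ensures the identification of orientation is unambiguous.
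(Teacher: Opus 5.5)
Your proposal is correct and is essentially the paper's proof. You push the three vertices of each cell through \eqref{eq:group}, recognise the image triple as $\up(j,m)$, $\dn(j,m)$, $\up(j,i)$ or $\dn(j,i)$, and check that the new index pair lies in $\Un$ or $\Dn$.

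There are two harmless slips. In the down-cell rotation the correct images are $\rho(i,j+1)=(j+1,m+1)$ and $\rho(i+1,j+1)=(j+1,m)$, the reverse of what you wrote, but the image set is unchanged. In your closing remark it is the down-cell that has two vertices of minimal coordinate sum and the up-cell that has one; this still distinguishes the two kinds.
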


\begin{proof}
Apply $\rho$ to the three vertices of $\up(i,j)$ in \eqref{eq:cells}:
\[
  (i,j) \mapsto (j, n{-}i{-}j), \quad
  (i{+}1,j) \mapsto (j, n{-}i{-}j{-}1), \quad
  (i,j{+}1) \mapsto (j{+}1, n{-}i{-}j{-}1).
\]
Setting $i' = j$ and $j' = n-1-i-j$, this image set is
$\{(i',j'+1), (i',j'), (i'+1,j')\} = \up(i',j')$. Here $j' \ge 0$ because
$i+j \le n-1$, and $i'+j' = n-1-i \le n-1$, so indeed $(i',j') \in \Un$. The
computation for $\dn(i,j)$ is the same with $j' = n-2-i-j$, and the two
statements for $\sigma$ are immediate from \eqref{eq:cells}.
\end{proof}

\begin{lemma}\label{lem:bipartite}
Each edge of the triangulation lies in exactly two cells, one up and one down.
Hence side-adjacent cells have opposite orientation, $\Hs$ is bipartite with
parts $\Un$ and $\Dn$, and $\Hs$ has no triangle. Moreover no cell is
side-adjacent to its image under $\rho$, $\rho^2$ or any reflection.
\end{lemma}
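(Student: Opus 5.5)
The plan is to work directly with the vertex-set description \eqref{eq:cells}. First I classify the edges of the triangulation by direction. An edge is a pair of vertices $\{p,q\}$ of $\Vn$ with $q-p \in \{(1,0),(0,1),(-1,1)\}$, up to sign. For each direction I list which cells contain the edge.

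For a horizontal edge $\{(a,b),(a+1,b)\}$, the up-cell containing it must be $\up(a,b)$, since in \eqref{eq:cells} the only horizontal pair in $\up(i,j)$ is $\{(i,j),(i+1,j)\}$. The down-cell containing it must be $\dn(a,b-1)$, since the only horizontal pair in $\dn(i,j)$ is $\{(i,j+1),(i+1,j+1)\}$. The other two directions are handled the same way:
\begin{itemize}
\item a vertical edge $\{(a,b),(a,b+1)\}$ lies in $\up(a,b)$ and $\dn(a-1,b)$;
\item an edge $\{(a+1,b),(a,b+1)\}$ lies in $\up(a,b)$ and $\dn(a,b)$.
\end{itemize}
This shows that each edge lies in at most one up-cell and at most one down-cell. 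Hence any two distinct cells sharing an edge have opposite orientations.

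Strictly, the statement "exactly two" fails for boundary edges, which lie in only one cell, and exactly two is true only for interior edges. I would note this and prove the operative claim: every edge lies in at most one cell of each orientation, and interior edges lie in exactly one of each. The interior case is checked from the index ranges $\Un,\Dn$. For example, if $\{(a,b),(a+1,b)\}$ has $b\ge1$, then $(a,b-1)\in\Dn$ because $a+b\le n-1$.

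Only the "at most one of each orientation" part is needed for the consequences, which follow in order:
\begin{itemize}
\item Side-adjacent cells meet in two vertices, i.e.\ share an edge, so they have opposite orientations.
\item Therefore every edge of $\Hs$ joins $\Un$ to $\Dn$, so $\Hs$ is bipartite with these parts.
\item A bipartite graph has no odd cycles, so in particular $\Hs$ has no triangle.
\end{itemize}

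Finally, Lemma~\ref{lem:action} shows that $\rho$ and $\sigma$ preserve orientation, so the whole group $D_3$ does too, including $\rho^2$ and the reflections $\sigma,\sigma\rho,\sigma\rho^2$. A cell $c$ and its image $gc$ therefore have the same orientation, and so they cannot be side-adjacent. If $gc=c$, they are equal rather than adjacent.

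The main obstacle is only the bookkeeping in the edge classification, namely getting the index shifts in \eqref{eq:cells} right for each direction. The rest is immediate.
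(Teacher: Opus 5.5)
Your proof is correct. The paper obtains the key step by a different route. It argues geometrically: the cells tile the triangle, so an interior edge borders two cells, and reflection in that edge reverses orientation, so one cell is up and the other down. The final sentence then follows from Lemma~\ref{lem:action}, exactly as in your last paragraph.

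You instead stay inside the combinatorial model \eqref{eq:cells}. Each cell has exactly one edge in each of the three directions, so an edge determines the index pair of any up- or down-cell containing it. Your table ($\up(a,b)$ paired with $\dn(a,b-1)$, $\dn(a-1,b)$, $\dn(a,b)$ for the three directions) gives uniqueness, and together with the index ranges it gives existence for interior edges.

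Your caveat is also right: boundary edges lie in only one cell. The paper's own proof in fact only claims two cells for an \emph{interior} edge. The later uses, here and in Lemma~\ref{lem:triangles} ("only two cells ever do", "the cell across $\{q,r\}$"), need only the claim you prove: at most one cell of each orientation per edge.

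What each route buys: the paper's argument is shorter, but it leans on the planar picture (tiling, reflection in an edge) without translating it into coordinates. Yours costs a little bookkeeping but needs nothing beyond the vertex-set definition, which the paper itself says is all it will use. As a bonus, your table already contains the incidence data that the paper computes separately in Lemma~\ref{lem:sideadj}.
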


\begin{proof}
The cells tile the triangle, so an interior edge lies in two of them; and
reflection in an edge reverses orientation, so the two cells meeting along an
edge are one up and one down. The last sentence of the lemma is this combined
with Lemma~\ref{lem:action}.
\end{proof}

\begin{lemma}\label{lem:sideadj}
The side-neighbours of $\dn(i,j)$ are exactly $\up(i,j)$, $\up(i+1,j)$ and
$\up(i,j+1)$, and all three lie in $\Tn$ whenever $(i,j) \in \Dn$. Hence
$\Hs$ has
\[
  E(\Hs) = 3|\Dn| = \tfrac32 n(n-1)
\]
edges, and $\deg_{\Hs} \up(i,j) = \ind{i+j \le n-2} + \ind{i \ge 1} + \ind{j \ge 1}$.
\end{lemma}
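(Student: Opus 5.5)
The plan is to compute the three edges of $\dn(i,j)$ directly from \eqref{eq:cells} and identify the up-cell on the other side of each one. The down-cell $\dn(i,j)$ has vertices $(i+1,j)$, $(i,j+1)$, $(i+1,j+1)$, hence the three edges
\[
  \{(i+1,j),(i,j+1)\},\quad \{(i+1,j),(i+1,j+1)\},\quad \{(i,j+1),(i+1,j+1)\}.
\]
I will match each edge to an up-cell containing it:
\begin{itemize}
  \item $\up(i,j)=\{(i,j),(i+1,j),(i,j+1)\}$ contains the first edge;
  \item $\up(i+1,j)=\{(i+1,j),(i+2,j),(i+1,j+1)\}$ contains the second;
  \item $\up(i,j+1)=\{(i,j+1),(i+1,j+1),(i,j+2)\}$ contains the third.
\end{itemize}
By Lemma~\ref{lem:bipartite}, each edge lies in exactly two cells, one up and one down. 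So these are the only side-neighbours, and every side-neighbour of a down-cell is an up-cell.

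Next I check membership. If $(i,j)\in\Dn$, then $i+j\le n-2$, and each of the index pairs $(i,j)$, $(i+1,j)$, $(i,j+1)$ has nonnegative entries and coordinate sum at most $n-1$. All three therefore lie in $\Un$, so all three neighbours are genuine cells of $\Tn$.

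For the edge count, I use that $\Hs$ is bipartite with parts $\Un$ and $\Dn$ (Lemma~\ref{lem:bipartite}). Every edge of $\Hs$ then has exactly one down-endpoint, and each down-cell has degree exactly $3$. Hence
\[
  E(\Hs)=3|\Dn|=3\binom{n}{2}=\tfrac32 n(n-1).
\]

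For the degree formula, I invert the correspondence. The up-cell $\up(i,j)$ is a side-neighbour of $\dn(a,b)$ precisely when $(i,j)$ is one of $(a,b)$, $(a+1,b)$, $(a,b+1)$. That is, $(a,b)$ is one of $(i,j)$, $(i-1,j)$, $(i,j-1)$, and it must lie in $\Dn$. The three conditions are:
\begin{itemize}
  \item $(i,j)\in\Dn$ iff $i+j\le n-2$;
  \item $(i-1,j)\in\Dn$ iff $i\ge1$, since then $(i-1)+j\le n-2$ follows from $i+j\le n-1$;
  \item $(i,j-1)\in\Dn$ iff $j\ge1$, by the same reasoning.
\end{itemize}
These three candidate down-cells are distinct, so the degree is the sum of the three indicators.

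There is no real obstacle here. The only points needing care are the distinctness of the candidates and the observation that the constraint $i+j\le n-1$ makes the sum condition automatic in the last two cases.
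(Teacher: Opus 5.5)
Your proposal is correct and follows essentially the same route as the paper: you identify the up-cell across each of the three edges of $\dn(i,j)$, check that the index pairs stay in $\Un$, count each edge of $\Hs$ once at its unique down-endpoint, and invert the incidence to get the degree of $\up(i,j)$, noting that the sum constraint is automatic for $(i-1,j)$ and $(i,j-1)$. You also state explicitly that the three candidate down-cells are distinct, which the paper leaves implicit.
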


\begin{proof}
$\dn(i,j)$ shares the edge $\{(i+1,j),(i,j+1)\}$ with $\up(i,j)$, the edge
$\{(i+1,j),(i+1,j+1)\}$ with $\up(i+1,j)$ and the edge
$\{(i,j+1),(i+1,j+1)\}$ with $\up(i,j+1)$; these are its three edges. If
$i+j \le n-2$ then each of the three index pairs has coordinate sum at most
$n-1$, so all three up-cells exist. Every side-adjacency therefore involves
exactly one down-cell, which gives $E(\Hs)$. Reading the same three incidences
from the up-cell's side, $\up(i,j)$ is side-adjacent to $\dn(i,j)$ if
$i+j \le n-2$, to $\dn(i-1,j)$ if $i \ge 1$, and to $\dn(i,j-1)$ if $j \ge 1$,
the remaining index constraints being automatic.
\end{proof}

\begin{lemma}[independent $3$-sets]\label{lem:i3}
Let $H$ be a graph with $N$ vertices, $E$ edges,
$P_2 = \sum_v \binom{\deg v}{2}$ and $T$ triangles. Then the number of
independent $3$-subsets of $V(H)$ is
\[
  i_3(H) = \binom{N}{3} - E(N-2) + P_2 - T .
\]
\end{lemma}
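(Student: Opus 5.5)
The plan is to prove the formula by inclusion--exclusion over the edges of $H$ that a $3$-subset spans. For a $3$-subset $S \subseteq V(H)$, let $e(S) \in \{0,1,2,3\}$ be the number of edges of $H$ with both ends in $S$. Write $a_k$ for the number of $3$-subsets with $e(S)=k$. Then $i_3(H) = a_0$ and $\binom{N}{3} = a_0+a_1+a_2+a_3$. Two more linear relations among the $a_k$ come from counting incidences, and the triangle count supplies the last unknown.

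First, double count the pairs (edge, $3$-set containing it). Each edge $uv$ lies in exactly $N-2$ three-sets $\{u,v,w\}$, and a $3$-set with $e(S)=k$ contains exactly $k$ edges, so
\[
  E(N-2) = a_1 + 2a_2 + 3a_3 .
\]
Second, double count the pairs (vertex $v$, two edges at $v$), that is, paths of length two. There are $P_2=\sum_v\binom{\deg v}{2}$ of them, and each determines the $3$-set $\{v\}\cup N$-ends. A $3$-set with two edges contains exactly one such path and a triangle contains three, while sets with $e(S)\le 1$ contain none. Hence
\[
  P_2 = a_2 + 3a_3 .
\]
Finally, $T = a_3$ by definition.

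Solving these relations gives
\[
  a_0 = \binom{N}{3} - (a_1+a_2+a_3) = \binom{N}{3} - \bigl[E(N-2) - a_2 - 2a_3\bigr].
\]
Substituting $a_2 = P_2 - 3a_3$, this becomes
\[
  a_0 = \binom{N}{3} - E(N-2) + P_2 - 3a_3 + 2a_3 = \binom{N}{3} - E(N-2) + P_2 - T,
\]
as claimed.

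There is no real obstacle here. The only point to check is the multiplicities: each triangle contains exactly three length-two paths (one centred at each vertex) and exactly three edges, and the elimination above accounts for both.
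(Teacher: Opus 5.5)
Your proof is correct and is essentially the paper's: your relations $E(N-2)=a_1+2a_2+3a_3$, $P_2=a_2+3a_3$ and $T=a_3$ are exactly the paper's three double counts $\sum_S e(S)$, $\sum_S\binom{e(S)}{2}$ and $\sum_S\binom{e(S)}{3}$. The only difference is presentation: the paper combines them in one step via $\sum_k(-1)^k\binom{m}{k}=\ind{m=0}$, whereas you do the same elimination by hand on the $a_k$.
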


\begin{proof}
For a $3$-set $S$ let $e(S)$ be the number of edges inside $S$. Counting pairs
(edge, extra vertex) gives $\sum_S e(S) = E(N-2)$. Two distinct edges inside a
$3$-set must share a vertex, and such a pair of adjacent edges determines $S$
outright, so $\sum_S \binom{e(S)}{2} = P_2$. And $\sum_S \binom{e(S)}{3} = T$ by
definition. Since
$\sum_{k \ge 0} (-1)^k \binom{m}{k} = \ind{m = 0}$ for $m \ge 0$,
\[
  i_3(H) = \sum_S \sum_{k \ge 0} (-1)^k \binom{e(S)}{k}
         = \binom{N}{3} - E(N-2) + P_2 - T. \qedhere
\]
\end{proof}

\section{The side-adjacency graph}

\begin{lemma}\label{lem:P2side}
For $n \ge 1$, $P_2(\Hs) = 3(n-1)^2$.
\end{lemma}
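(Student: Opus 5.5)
We compute $P_2(\Hs) = \sum_v \binom{\deg v}{2}$ by splitting the sum over down-cells and up-cells. By Lemma~\ref{lem:sideadj} every down-cell has degree exactly $3$. Hence the down-cells contribute
\[
  3|\Dn| = 3\binom{n}{2}.
\]
The up-cells are handled with the degree formula $\deg \up(i,j) = \ind{i+j\le n-2} + \ind{i\ge1} + \ind{j\ge1}$ from the same lemma.

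To organize the up-cells, sort $(i,j)\in\Un$ by which of the three indicators fail.
\begin{itemize}
  \item The three corner cells $(0,0)$, $(n-1,0)$ and $(0,n-1)$ have degree $1$ when $n\ge2$. Each contributes $\binom12=0$.
  \item The non-corner cells on one side of $\Un$ (exactly one indicator fails) have degree $2$ and contribute $1$ each. Each side has $n-2$ of them, giving $3(n-2)$ in total.
  \item The interior cells, with $i,j\ge1$ and $i+j\le n-2$, have degree $3$ and contribute $3$ each. They number $\binom{n-2}{2}$, since they correspond to $(i-1,j-1)$ with coordinate sum at most $n-4$.
\end{itemize}
The sum of the up-cell contributions is therefore $3(n-2)+3\binom{n-2}{2}$.

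Adding the two parts gives
\[
  P_2(\Hs) = 3\binom{n}{2} + 3(n-2) + 3\binom{n-2}{2}.
\]
The identity $\binom{n}{2} + \binom{n-2}{2} = n^2 - 3n + 3$ turns this into $3(n^2-3n+3) + 3(n-2) = 3(n-1)^2$, as claimed.

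The only real care needed is at small $n$, where the classification above degenerates. For $n=1$ there is a single up-cell of degree $0$ and no down-cells, so $P_2 = 0 = 3(0)^2$. For $n=2$ the corners are the only up-cells, so there are no side or interior cells, and the formula gives $3+0+0=3=3(1)^2$. For $n\ge2$ the counts $n-2$ and $\binom{n-2}{2}$ are correct even when they are zero, because the three indicator-failure regions are disjoint. That disjointness holds since a cell failing two indicators is necessarily a corner. The main obstacle is only this bookkeeping, namely checking that the side and interior counts are exact and that no cell fails all three indicators when $n\ge2$.
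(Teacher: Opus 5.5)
Your proof is correct and takes the same approach as the paper. Every down-cell has degree $3$, which contributes $3|\Dn|$, and the up-cells are sorted by which indicators in the degree formula of Lemma~\ref{lem:sideadj} vanish: corners of degree $1$, the $3(n-2)$ boundary non-corners of degree $2$, and the $\binom{n-2}{2}$ interior cells of degree $3$. The only differences are cosmetic: you group the classes by the threefold symmetry and finish with $\binom{n}{2}+\binom{n-2}{2}=n^2-3n+3$, rather than adding $\tfrac32 n(n-1)$ and $\tfrac32(n-2)(n-1)$.
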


\begin{proof}
Every down-cell has degree $3$ by Lemma~\ref{lem:sideadj}, contributing
$3\binom{3}{2}/3 \cdot |\Dn|$, that is $3|\Dn| = \tfrac32 n(n-1)$. For the
up-cells use the degree formula of Lemma~\ref{lem:sideadj} and split $\Un$ by
which of its three indicators vanish. For $n \ge 2$ the classes are: the single
cell $\up(0,0)$, of degree $1$; the $2(n-2)$ cells $\up(i,0)$ and $\up(0,i)$
with $1 \le i \le n-2$, of degree $2$; the two cells $\up(n-1,0)$ and
$\up(0,n-1)$, of degree $1$; the $\binom{n-2}{2}$ cells with $i,j \ge 1$ and
$i+j \le n-2$, of degree $3$; and the $n-2$ cells with $i,j \ge 1$ and
$i+j = n-1$, of degree $2$. (These counts sum to $\binom{n+1}{2}$, as they
should.) Hence
\[
  \sum_{\up} \binom{\deg}{2}
   = 2(n-2)\cdot 1 + \binom{n-2}{2}\cdot 3 + (n-2) \cdot 1
   = \tfrac32 (n-2)(n-1),
\]
and $P_2(\Hs) = \tfrac32 n(n-1) + \tfrac32 (n-2)(n-1) = 3(n-1)^2$. The case
$n = 1$ is $0 = 0$.
\end{proof}

\begin{proposition}\label{prop:fixid-side}
For $n \ge 1$,
$\displaystyle
  \fix_{\mathrm{id}}(\Hs) = \binom{n^2}{3} - \tfrac32 n(n-1)\,(n^2-2) + 3(n-1)^2 .
$
\end{proposition}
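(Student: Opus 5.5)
The plan is to apply Lemma~\ref{lem:i3} directly to $H = \Hs$. Here $\fix_{\mathrm{id}}(\Hs)$ is by definition the number of all independent $3$-sets of $\Hs$, since the identity fixes every set. So it suffices to identify the four ingredients $N$, $E$, $P_2$, $T$ for this graph and substitute them.

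The ingredients come from earlier results in order. First, $N = n^2$ is the cell count recorded in the introduction, $|\Un| + |\Dn| = n^2$. Second, $E = E(\Hs) = \tfrac32 n(n-1)$ by Lemma~\ref{lem:sideadj}, so the linear correction term is $E(N-2) = \tfrac32 n(n-1)(n^2-2)$. Third, $P_2 = 3(n-1)^2$ by Lemma~\ref{lem:P2side}, valid for all $n \ge 1$. Fourth, $T = 0$, because $\Hs$ is bipartite by Lemma~\ref{lem:bipartite} and so contains no triangle. Substituting into $i_3(H) = \binom{N}{3} - E(N-2) + P_2 - T$ gives exactly the displayed formula.

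The only point that needs care is the small-$n$ range. For $n = 1$ all terms vanish: $\binom{1}{3} = 0$, $E = 0$ and $P_2 = 0$. This agrees with there being a single cell and no $3$-sets, so the formula holds there as well. Lemma~\ref{lem:P2side} already covers $n = 1$, so no separate argument is needed. There is no real obstacle. The substantive work, namely the degree census giving $P_2$ and the bipartiteness killing $T$, has already been done in the preceding lemmas, and the proposition is their assembly.
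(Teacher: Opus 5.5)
Your proposal is correct and is the paper's own proof: Lemma~\ref{lem:i3} applied with $N = n^2$, $E = \tfrac32 n(n-1)$ from Lemma~\ref{lem:sideadj}, $P_2 = 3(n-1)^2$ from Lemma~\ref{lem:P2side}, and $T = 0$ from Lemma~\ref{lem:bipartite}. Your handling of $n = 1$ also matches the paper, which already covers that case inside Lemma~\ref{lem:P2side}.
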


\begin{proof}
Lemma~\ref{lem:i3} with $N = n^2$, $E = \tfrac32 n(n-1)$ from
Lemma~\ref{lem:sideadj}, $P_2 = 3(n-1)^2$ from Lemma~\ref{lem:P2side}, and
$T = 0$ from Lemma~\ref{lem:bipartite}.
\end{proof}

\section{The rotation terms}

\begin{lemma}\label{lem:fixedcells}
Set $\varepsilon_n = 0$ if $3 \mid n$ and $\varepsilon_n = 1$ otherwise. The
rotation $\rho$ fixes no cell when $3 \mid n$, and exactly one cell $z$
otherwise, namely
\begin{align*}
  z &= \up(m,m), \quad m = \tfrac{n-1}{3}, \quad
       \text{if } n \equiv 1 \!\!\pmod 3,\\
  z &= \dn(m,m), \quad m = \tfrac{n-2}{3}, \quad
       \text{if } n \equiv 2 \!\!\pmod 3 .
\end{align*}
Consequently the number of free $\rho$-orbits of cells is $(n^2-\varepsilon_n)/3$,
and every $\rho$-invariant $3$-set of cells is a single free orbit
$\{c, \rho c, \rho^2 c\}$.
\end{lemma}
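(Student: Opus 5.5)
We use the explicit action of Lemma~\ref{lem:action}, which preserves orientation. It therefore suffices to find the fixed points of $\rho$ separately on $\Un$ and on $\Dn$.

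For up-cells, $\rho\,\up(i,j) = \up(i,j)$ means $(j,\,n-1-i-j) = (i,j)$, that is, $i = j$ and $n-1-2i = i$. So $i = j = m = (n-1)/3$. This is an integer exactly when $n \equiv 1 \pmod 3$, and then $(m,m)$ lies in $\Un$ because $2m \le n-1$. For down-cells the same computation with $(j,\,n-2-i-j)$ gives $i = j = (n-2)/3$. This is integral exactly when $n \equiv 2 \pmod 3$, and then $(m,m) \in \Dn$ because $2m \le n-2$ (for $n = 2$ we get $m = 0$ and $\Dn = \{(0,0)\}$, which is fine). The two congruence conditions are mutually exclusive, and neither holds when $3 \mid n$. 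So $\rho$ fixes no cell if $3 \mid n$, and exactly the stated cell $z$ otherwise.

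For the orbit count, $\rho$ has order $3$, so every $\langle\rho\rangle$-orbit of cells has size $1$ or $3$. The size-$1$ orbits are exactly the fixed cells, of which there are $\varepsilon_n$. The remaining $n^2 - \varepsilon_n$ cells split into free orbits of size $3$, giving $(n^2-\varepsilon_n)/3$ free orbits. As a consistency check, $n^2 \equiv 1 \pmod 3$ when $3 \nmid n$.

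Finally, let $S$ be a $\rho$-invariant $3$-set. Then $S$ is a union of $\rho$-orbits of cells, so $3$ is written as a sum of orbit sizes, each $1$ or $3$. At most one orbit of size $1$ exists, so the decomposition $1+1+1$ is impossible. Hence $S$ is a single free orbit $\{c,\rho c,\rho^2 c\}$.

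There is no real obstacle. The only point needing care is checking that the solution $(m,m)$ actually lies in $\Un$ or $\Dn$, and that orbit sizes divide $|\langle\rho\rangle| = 3$.
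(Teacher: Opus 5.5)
Your proposal is correct and follows the paper's proof essentially step for step. You solve the fixed-point equations from Lemma~\ref{lem:action} separately on $\Un$ and $\Dn$, check that the solution cell lies in the grid, and use that orbits have size $1$ or $3$ with at most one fixed cell; your explicit membership checks and derivation of $(n^2-\varepsilon_n)/3$ just fill in details the paper leaves implicit.
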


\begin{proof}
By Lemma~\ref{lem:action}, $\rho(\up(i,j)) = \up(i,j)$ forces $i = j$ and
$j = n-1-i-j$, i.e.\ $3i = n-1$; and $\rho(\dn(i,j)) = \dn(i,j)$ forces
$3i = n-2$. The stated cells do lie in $\Tn$. Orbits have size $1$ or $3$, and a
$\rho$-invariant $3$-set is either one free orbit or three fixed cells; the
latter is impossible since at most one fixed cell exists.
\end{proof}

One small observation, used for both graphs. We have
$c \cap \rho^2 c \ne \emptyset$ if and only if $\rho c \cap c \ne \emptyset$
(apply $\rho$ and use $\rho^3 = \mathrm{id}$), so a free orbit is pairwise
non-adjacent precisely when $c$ is not adjacent to $\rho c$. Only one of the
three pairs ever has to be tested.

\begin{proposition}\label{prop:fixrot-side}
For $n \ge 1$, $\fix_\rho(\Hs) = (n^2 - \varepsilon_n)/3$.
\end{proposition}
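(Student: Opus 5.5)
By Lemma~\ref{lem:fixedcells}, every $\rho$-invariant $3$-set of cells is a single free orbit $\{c,\rho c,\rho^2 c\}$. The plan is to show that every such orbit is independent in $\Hs$, and then simply count the free orbits.

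First, for a free orbit, the observation preceding the proposition reduces independence to one test: it suffices that $c$ and $\rho c$ are not side-adjacent. The other two pairs, $\{\rho c,\rho^2 c\}$ and $\{c,\rho^2 c\}$, are images of this one under $\rho$ and $\rho^2$. Lemma~\ref{lem:bipartite} says exactly that no cell is side-adjacent to its image under $\rho$. The reason is that $\rho$ preserves orientation (Lemma~\ref{lem:action}), while side-adjacent cells have opposite orientation. Hence every free orbit is an independent $3$-set of $\Hs$.

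Conversely, every $\rho$-fixed independent $3$-set is $\rho$-invariant, so it is a free orbit. Therefore $\fix_\rho(\Hs)$ equals the number of free $\rho$-orbits of cells. By Lemma~\ref{lem:fixedcells} this number is $(n^2-\varepsilon_n)/3$.

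The remaining point is the small cases. For $n=1$ the formula gives $(1-1)/3=0$, which is correct since there is only one cell. For $n=2$ it gives $(4-1)/3=1$, consistent with the three corner up-cells forming one orbit, with the central down-cell fixed. No case split is needed beyond this: the identification of invariant sets with free orbits, and the orientation argument, hold uniformly for all $n\ge1$. I expect no real obstacle, since the substantive work is already contained in Lemmas~\ref{lem:bipartite} and~\ref{lem:fixedcells}.
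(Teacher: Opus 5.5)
Your proof is correct and is essentially the paper's own argument: Lemma~\ref{lem:fixedcells} identifies the $\rho$-invariant $3$-sets with the free orbits, and Lemma~\ref{lem:bipartite} (orientation is preserved by $\rho$, while side-adjacent cells have opposite orientation) makes every free orbit independent in $\Hs$. Your checks at $n=1$ and $n=2$ are accurate but not needed, since both lemmas hold for all $n \ge 1$.
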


\begin{proof}
By Lemma~\ref{lem:fixedcells} the $\rho$-invariant $3$-sets are the free orbits,
and by Lemma~\ref{lem:bipartite} no cell is side-adjacent to its $\rho$-image,
so every free orbit is independent in $\Hs$.
\end{proof}

\section{The reflection term for side-adjacency}

By Lemma~\ref{lem:action}, $\sigma$ fixes exactly the cells $\up(i,i)$ with
$2i \le n-1$ and $\dn(i,i)$ with $2i \le n-2$, that is
$\lceil n/2 \rceil + \lfloor n/2 \rfloor = n$ cells in all. Call these the
\emph{axis cells}, and list them in the order they occur along the axis:
\begin{equation}\label{eq:axisorder}
  c_1 = \up(0,0),\quad c_2 = \dn(0,0),\quad c_3 = \up(1,1),\quad
  c_4 = \dn(1,1),\quad \dots
\end{equation}

\begin{lemma}\label{lem:axis-side}
The only side-adjacencies among axis cells are the pairs
$\{\up(i,i), \dn(i,i)\}$. Thus the subgraph of $\Hs$ induced on the axis is a
matching with $\lfloor n/2 \rfloor$ edges on $n$ vertices.
\end{lemma}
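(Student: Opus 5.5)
The plan is to read off the side-adjacencies among axis cells directly from Lemma~\ref{lem:sideadj}, using the bipartite structure of Lemma~\ref{lem:bipartite}. By Lemma~\ref{lem:bipartite}, any side-adjacency joins an up-cell to a down-cell. So it suffices to ask, for each axis down-cell $\dn(i,i)$ with $2i \le n-2$, which of its side-neighbours are axis up-cells.

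Lemma~\ref{lem:sideadj} says the side-neighbours of $\dn(i,i)$ are exactly $\up(i,i)$, $\up(i+1,i)$ and $\up(i,i+1)$. An up-cell $\up(a,b)$ is an axis cell precisely when $a=b$, since $\sigma(\up(a,b))=\up(b,a)$ by Lemma~\ref{lem:action}. Among the three neighbours, $\up(i+1,i)$ and $\up(i,i+1)$ have unequal indices, so they are not axis cells. The first neighbour, $\up(i,i)$, is an axis cell, and it lies in $\Tn$ because $2i \le n-2 \le n-1$. Hence each axis down-cell is side-adjacent to exactly one axis cell, namely $\up(i,i)$.

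It remains to check that these pairs form a matching and to count them. Each axis up-cell $\up(i,i)$ is matched to at most one axis down-cell, namely $\dn(i,i)$, since distinct $i$ give distinct up-cells. The edges therefore form a matching, and there is one edge for each axis down-cell. Their number is $\#\{i \ge 0 : 2i \le n-2\} = \lfloor n/2 \rfloor$. The number of vertices is $n$, as counted just before \eqref{eq:axisorder}.

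No step is a real obstacle. The only points needing care are the membership conditions ($2i\le n-2$ implies $\up(i,i)\in\Un$) and the boundary case $n=1$, where there are no axis down-cells and the matching is empty, consistent with $\lfloor 1/2\rfloor=0$. In the ordering \eqref{eq:axisorder}, the matched pairs are the consecutive pairs $\{c_{2k-1},c_{2k}\}$.
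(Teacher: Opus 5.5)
Your proof is correct and is essentially the paper's argument: list the three side-neighbours of an axis down-cell $\dn(i,i)$ via Lemma~\ref{lem:sideadj}, observe that only $\up(i,i)$ lies on the axis, and count the axis down-cells to get $\lfloor n/2 \rfloor$ edges. Your appeal to Lemma~\ref{lem:bipartite}, which justifies checking only down-cells, spells out a step the paper leaves implicit.
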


\begin{proof}
By Lemma~\ref{lem:sideadj} the side-neighbours of $\dn(i,i)$ are $\up(i,i)$,
$\up(i+1,i)$ and $\up(i,i+1)$, and the last two are off-axis. The number of
edges is the number of axis down-cells, $\lfloor n/2 \rfloor$.
\end{proof}

\begin{lemma}\label{lem:mirrorpairs}
A $\sigma$-invariant $3$-set of cells is either three axis cells, or one axis
cell together with one \emph{mirror pair} $\{c, \sigma c\}$ with $c$ off-axis.
There are $(n^2-n)/2$ mirror pairs. For an axis cell $a$ we have $a \sim c$ if
and only if $a \sim \sigma c$, in either graph.
\end{lemma}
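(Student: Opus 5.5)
The lemma makes three claims, and I would prove them in order.

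\emph{Shape of invariant sets.} Let $S$ be a $\sigma$-invariant $3$-set. Since $\sigma^2 = \mathrm{id}$, the group $\langle\sigma\rangle$ acts on $S$ with orbits of size $1$ or $2$, so $|S| = 3$ decomposes as $1+1+1$ or $1+2$. A size-$1$ orbit is a cell fixed by $\sigma$. By Lemma~\ref{lem:action} the fixed cells are exactly the $\up(i,i)$ and $\dn(i,i)$, that is, the axis cells. A size-$2$ orbit is a pair $\{c,\sigma c\}$ with $c \ne \sigma c$, i.e.\ with $c$ off-axis. This gives the two stated types.

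\emph{Number of mirror pairs.} There are $n^2$ cells in total and $n$ axis cells (counted just before the lemma). The remaining $n^2-n$ off-axis cells are permuted by the involution $\sigma$ without fixed points, so they split into $(n^2-n)/2$ mirror pairs.

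\emph{Adjacency symmetry.} Both adjacency relations are defined purely in terms of vertex sets: side-adjacency means $|c\cap c'| = 2$, and point-adjacency means $c\cap c' \ne \emptyset$. The map $\sigma$ is a bijection on vertices and acts on cells by taking images of vertex sets, so $|\sigma c \cap \sigma c'| = |c\cap c'|$. Hence $\sigma$ is an automorphism of both $\Hs$ and $\Hp$. For an axis cell $a$ we have $\sigma a = a$, so
\[
a \sim c \iff \sigma a \sim \sigma c \iff a \sim \sigma c,
\]
in either graph.

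No step here is a real obstacle. The only point needing care is confirming that $\sigma$ acts on cells as a permutation of $\Tn$'s cells, which Lemma~\ref{lem:action} already guarantees.
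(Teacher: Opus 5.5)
Your proposal is correct and follows the paper's proof essentially step for step: the orbit decomposition $3 = 1+1+1 = 1+2$ under the involution $\sigma$, the $n^2-n$ off-axis cells splitting into free pairs, and the use of $\sigma a = a$ together with $\sigma$ preserving adjacency. Your remark that $\sigma$ preserves intersection sizes of vertex sets, and is therefore an automorphism of both $\Hs$ and $\Hp$ giving the ``if and only if'' directly, spells out what the paper leaves implicit.
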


\begin{proof}
$\sigma$ is an involution, so it decomposes a $\sigma$-invariant set into fixed
points and $2$-element orbits; $3 = 3 + 0 = 1 + 2$. There are $n^2 - n$
off-axis cells. Finally $a \sim c$ implies $\sigma a \sim \sigma c$, and
$\sigma a = a$.
\end{proof}

\begin{lemma}\label{lem:axis-offaxis}
In $\Hs$ the cell $\up(0,0)$ has no off-axis neighbour. Every other axis cell
has exactly one off-axis neighbouring mirror pair: that of $\up(i,i)$ with
$i \ge 1$ is $\{\dn(i-1,i), \dn(i,i-1)\}$, and that of $\dn(i,i)$ is
$\{\up(i+1,i), \up(i,i+1)\}$; in both cases the pair lies in $\Tn$.
\end{lemma}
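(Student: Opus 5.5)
The plan is to read everything off the explicit neighbour lists in Lemma~\ref{lem:sideadj}, taken from both sides. First we identify the off-axis cells. By Lemma~\ref{lem:action}, $\sigma$ sends $\up(i,j)$ to $\up(j,i)$ and $\dn(i,j)$ to $\dn(j,i)$, so a cell is off-axis exactly when its index pair has $i \ne j$. Moreover $\{\dn(i-1,i),\dn(i,i-1)\}$ and $\{\up(i+1,i),\up(i,i+1)\}$ are each a single $\sigma$-orbit, hence a mirror pair.

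For an up-cell on the axis, the degree formula in Lemma~\ref{lem:sideadj} says that $\up(i,i)$ is side-adjacent to $\dn(i,i)$ when $2i \le n-2$, to $\dn(i-1,i)$ when $i \ge 1$, and to $\dn(i,i-1)$ when $i \ge 1$. The first of these is an axis cell. The other two are off-axis, since their indices differ, and they form one mirror pair. When $i = 0$ the last two indicators vanish. Then the only possible neighbour of $\up(0,0)$ is $\dn(0,0)$, which lies on the axis (or does not exist when $n=1$). So $\up(0,0)$ has no off-axis neighbour. For $i \ge 1$ the indicators equal $1$, so both cells of the pair exist in $\Tn$; indeed $(i-1)+i = 2i-1 \le n-2$ because $2i \le n-1$.

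For a down-cell on the axis, Lemma~\ref{lem:sideadj} lists the three neighbours of $\dn(i,i)$ as $\up(i,i)$, $\up(i+1,i)$ and $\up(i,i+1)$, all lying in $\Tn$. The first is on the axis, and the other two are off-axis and form one mirror pair.

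Exactness (``exactly one'' mirror pair) follows because these lists are complete neighbourhoods. There is no real obstacle here. The only care needed is the boundary bookkeeping: checking that the index constraints guarantee existence in $\Tn$, and treating the degenerate case $i=0$ separately.
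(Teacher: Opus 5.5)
Your proposal is correct and follows the paper's proof essentially step for step. You read the complete neighbour lists of $\up(i,i)$ and $\dn(i,i)$ off Lemma~\ref{lem:sideadj}, set aside the on-axis neighbour, derive $2i-1 \le n-2$ from $2i \le n-1$, and treat $i=0$ separately. Your explicit remark that each listed pair is a single $\sigma$-orbit is a small point the paper leaves implicit.
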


\begin{proof}
From Lemma~\ref{lem:sideadj}, the side-neighbours of $\up(i,i)$ are $\dn(i,i)$
(on the axis), $\dn(i-1,i)$ and $\dn(i,i-1)$; the last two require $i \ge 1$,
and then $(i-1)+i \le n-2$ follows from $2i \le n-1$. The neighbours of
$\dn(i,i)$ are $\up(i,i)$ (on the axis), $\up(i+1,i)$ and $\up(i,i+1)$, which
need $2i+1 \le n-1$, and this follows from $2i \le n-2$. For $i = 0$ the cells
$\dn(-1,0)$ and $\dn(0,-1)$ do not exist.
\end{proof}

\begin{proposition}\label{prop:fixref-side}
For $n \ge 1$,
\[
  \fix_\sigma(\Hs)
   = \underbrace{\binom{n}{3} - \Bigl\lfloor \tfrac{n}{2} \Bigr\rfloor (n-2)}
        _{\text{three axis cells}}
   \; + \; \underbrace{\tfrac12 n^2(n-1) - (n-1)}
        _{\text{axis cell plus mirror pair}} .
\]
\end{proposition}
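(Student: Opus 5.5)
The count splits along the dichotomy of Lemma~\ref{lem:mirrorpairs}. Every $\sigma$-invariant $3$-set is either three axis cells or one axis cell plus one mirror pair. I count the independent sets of each kind separately and add the two totals.

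\emph{Three axis cells.} By Lemma~\ref{lem:axis-side} the graph induced on the $n$ axis cells is a matching with $\lfloor n/2\rfloor$ edges. I apply Lemma~\ref{lem:i3} to this induced graph, with $N=n$ and $E=\lfloor n/2\rfloor$. A matching has no path of length two, so $P_2=0$, and it has no triangles, so $T=0$. This gives exactly $\binom{n}{3}-\lfloor n/2\rfloor(n-2)$ independent triples. The direct argument is equally short: each matching edge lies in $n-2$ triples, and no triple contains two matching edges, since two such edges would need four distinct vertices.

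\emph{Axis cell plus mirror pair.} A set $\{a,c,\sigma c\}$ is independent precisely when $a\not\sim c$, $a\not\sim\sigma c$ and $c\not\sim\sigma c$.
\begin{itemize}
\item The condition $c\not\sim\sigma c$ always holds, by the last sentence of Lemma~\ref{lem:bipartite}.
\item By Lemma~\ref{lem:mirrorpairs}, the conditions $a\not\sim c$ and $a\not\sim\sigma c$ are equivalent, so only one needs checking.
\end{itemize}
So I start from all $n\cdot(n^2-n)/2=\tfrac12n^2(n-1)$ choices of (axis cell, mirror pair). From these I subtract the choices in which $a$ is adjacent to the pair. By Lemma~\ref{lem:axis-offaxis}, $\up(0,0)$ is adjacent to no mirror pair, and each of the other $n-1$ axis cells is adjacent to exactly one mirror pair lying in $\Tn$. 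Hence exactly $n-1$ choices are bad, which gives $\tfrac12n^2(n-1)-(n-1)$.

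Adding the two contributions gives the stated formula. The only point needing care is small $n$. For $n\le2$ the expressions $\binom n3$ and $\lfloor n/2\rfloor(n-2)$ vanish, and Lemma~\ref{lem:axis-offaxis} still applies with the stated ranges of $i$. I will check $n=1,2$ by hand for safety: at $n=1$ both sides are $0$; at $n=2$ there are $2$ axis cells and $1$ mirror pair, the mirror pair is adjacent to $\dn(0,0)$, so the left side is $1$, and the right side is $0+2-1=1$.

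I expect no genuine obstacle. The one step doing real work is the reduction of the mirror-pair condition to the single test $a\not\sim c$, which rests on the orientation argument of Lemma~\ref{lem:bipartite}.
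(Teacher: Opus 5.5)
Your proof is correct and follows the paper's argument essentially step for step. You use the same split via Lemma~\ref{lem:mirrorpairs}, the same matching count for the three-axis-cell case (your route through Lemma~\ref{lem:i3} with $P_2=T=0$ is just a repackaging of the paper's direct count), and the same removal of exactly $n-1$ (axis cell, mirror pair) choices via Lemmas~\ref{lem:bipartite} and \ref{lem:axis-offaxis}.
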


\begin{proof}
For the first term, Lemma~\ref{lem:axis-side} says we need the independent
$3$-sets of a matching with $m = \lfloor n/2 \rfloor$ edges on $n$ vertices.
Indeed a $3$-set cannot contain two disjoint edges, so exactly $m(n-2)$ of the
$\binom{n}{3}$ triples are dependent.

For the second, fix an axis cell $a$ and count admissible mirror pairs
$\{c,\sigma c\}$. The internal condition $c \not\sim \sigma c$ is automatic by
Lemma~\ref{lem:bipartite}, and by Lemma~\ref{lem:mirrorpairs} the only other
condition is $a \not\sim c$. By Lemma~\ref{lem:axis-offaxis} exactly one of the
$(n^2-n)/2$ mirror pairs is excluded, for each of the $n-1$ axis cells other
than $\up(0,0)$, and none is excluded for $\up(0,0)$. Summing over the $n$ axis
cells gives $n \cdot (n^2-n)/2 - (n-1)$.
\end{proof}

\section{The point-adjacency graph}

Here a cell has up to twelve neighbours and the triangle count no longer
vanishes, so there is rather more to do.

\begin{lemma}\label{lem:nbrtypes}
Two up-cells, or two down-cells, are point-adjacent exactly when the difference
of their index pairs lies in $\{\pm(1,0), \pm(0,1), \pm(1,-1)\}$; and
$\up(i,j)$ is point-adjacent to $\dn(i',j')$ exactly when
\[
  (i'-i,\, j'-j) \in \bigl\{(0,0),\,(-1,0),\,(0,-1),\,(-1,-1),\,(1,-1),\,(-1,1)\bigr\}.
\]
Consequently $E(\Hp) = 6(n-1)^2$.
\end{lemma}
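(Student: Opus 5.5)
The plan is to prove Lemma~\ref{lem:nbrtypes} directly from the vertex-set description \eqref{eq:cells}: two cells are point-adjacent exactly when their vertex sets intersect. The argument has three steps: the same-orientation case, the mixed case, and the edge count.

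\textbf{Same orientation.} Consider $\up(i,j)$ and $\up(i',j')$ with $(a,b) = (i'-i,j'-j) \ne (0,0)$. The vertex set of $\up(i,j)$ is $(i,j) + \{(0,0),(1,0),(0,1)\}$. The two sets meet if and only if $(a,b)$ lies in the difference set $\{u - v\}$ of $\{(0,0),(1,0),(0,1)\}$. Its nonzero elements are exactly $\pm(1,0), \pm(0,1), \pm(1,-1)$. Down-cells are translates of $\{(1,0),(0,1),(1,1)\}$, which is itself a translate of $-\{(0,0),(1,0),(0,1)\}$. Its difference set is therefore the same, so the same list results.

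\textbf{Mixed orientation.} $\up(i,j)$ meets $\dn(i',j')$ if and only if $(i'-i,j'-j) \in A - B$, where $A = \{(0,0),(1,0),(0,1)\}$ and $B = \{(1,0),(0,1),(1,1)\}$. Computing all nine differences $a - b$ gives
\[
(-1,0),\ (0,-1),\ (-1,-1),\ (0,0),\ (1,-1),\ (0,-1),\ (-1,1),\ (0,0),\ (-1,0).
\]
The distinct values are exactly the six listed in the lemma. There is one subtlety: two cells sharing a vertex might not both lie in $\Tn$. The lemma is about cells of $\Tn$, so this only matters for the edge count, where membership in $\Un$ or $\Dn$ is imposed anyway.

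\textbf{Edge count.} Count ordered index pairs inside the triangles and halve, or count each unordered pair once, by type.

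Up--up pairs come in three directions, $(1,0)$, $(0,1)$ and $(1,-1)$. For direction $(1,0)$ we need $(i,j),(i+1,j) \in \Un$, i.e.\ $(i,j) \in \Dn$, which gives $|\Dn| = \binom n2$ pairs. The same holds for $(0,1)$. For $(1,-1)$ we need $(i,j+1),(i+1,j)\in\Un$, i.e.\ $i+j \le n-2$, again $\binom n2$. This totals $3\binom n2$.

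Down--down pairs work the same way with $\Dn$ in place of $\Un$, giving $3\binom{n-1}2$.

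For mixed pairs, fix $\dn(i',j')$ and count the $(i,j)\in\Un$ with $(i,j) = (i',j') - \delta$ for each of the six $\delta$:
\begin{itemize}
\item $\delta = (0,0),(-1,0),(0,-1)$ give $(i',j')$, $(i'+1,j')$, $(i',j'+1)$, which are always in $\Un$. This yields $3\binom n2$.
\item $\delta = (-1,-1)$ gives $(i'+1,j'+1)$, which needs $i'+j' \le n-3$. This yields $\binom{n-1}2$.
\item $\delta = (1,-1)$ gives $(i'-1,j'+1)$, which needs $i' \ge 1$. This yields $|\Dn| - (n-1) = \binom{n-1}2$.
\item $\delta = (-1,1)$ is symmetric to the previous case and yields $\binom{n-1}2$.
\end{itemize}
The mixed total is $3\binom n2 + 3\binom{n-1}2$.

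The grand total is $6\binom n2 + 6\binom{n-1}2 = 3(n-1)(n + n-2) = 6(n-1)^2$.

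The main risk is bookkeeping in the mixed case, specifically getting the boundary conditions for the three "diagonal" offsets right. I would cross-check the result against $n=2$: there are 4 cells, and the central down-cell meets all three up-cells, while the corner up-cells pairwise share vertices. That gives all $\binom42 = 6$ pairs, matching $6\cdot1^2$.
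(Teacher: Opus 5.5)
Your proposal is correct and follows the paper's proof. The adjacency lists come from checking which translates of the vertex triples in \eqref{eq:cells} meet; your difference-set computation $A-A$, $A-B$ spells out what the paper calls reading them off by inspection. The edge count is the same census by displacement class in $\Un$ and $\Dn$, including the $\binom{n-1}{2}$ contributions of the three diagonal mixed offsets, totalling $6\binom{n}{2}+6\binom{n-1}{2}=6(n-1)^2$.
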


\begin{proof}
All three lists are read off \eqref{eq:cells} by inspecting which translates
share a vertex; for instance $\up(0,0)$ and $\dn(-1,-1) = \{(0,-1),(-1,0),(0,0)\}$
meet exactly in $(0,0)$. Now count each type as a lattice count in $\Un$ or
$\Dn$. For up-up pairs, each of the three displacement classes contributes
$\binom{n}{2}$: for $(1,0)$ the condition is $(i,j) \in U_{n-1}$; for $(0,1)$ it
is symmetric; and for $(1,-1)$ it is $j \ge 1$, $i+j \le n-1$, which gives
$\sum_{j=1}^{n-1}(n-j) = \binom{n}{2}$. For down-down pairs the same three
classes contribute $\binom{n-1}{2}$ each. Among the six up-down classes,
$(0,0)$, $(-1,0)$ and $(0,-1)$ each contribute $\binom{n}{2}$, while
$(-1,-1)$, $(1,-1)$ and $(-1,1)$ each contribute $\binom{n-1}{2}$. In total
\[
  E(\Hp) = 6\binom{n}{2} + 6\binom{n-1}{2} = 3(n-1)\bigl(n + (n-2)\bigr)
         = 6(n-1)^2. \qedhere
\]
\end{proof}

\begin{lemma}\label{lem:P2point}
Write $a = \ind{i \ge 1}$, $b = \ind{j \ge 1}$. Then
\[
  \deg_{\Hp} \up(i,j) = 3(a+b+c) + (ab+bc+ca) \text{ with } c = \ind{i+j \le n-2},
\]
\[
  \deg_{\Hp} \dn(i,j) = 3 + 3(a+b+c') \text{ with } c' = \ind{i+j \le n-3},
\]
so every interior cell has degree $12$; and for $n \ge 2$,
\[
  P_2(\Hp) = 66n^2 - 225n + 198 .
\]
\end{lemma}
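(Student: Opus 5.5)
The plan is to derive both degree formulas directly from the neighbour lists of Lemma~\ref{lem:nbrtypes}, by deciding for each listed displacement whether the neighbouring cell actually lies in $\Tn$. Then I would sum $\binom{\deg}{2}$ over a partition of $\Un$ and $\Dn$ into boundary classes, exactly as in the proof of Lemma~\ref{lem:P2side}.

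\emph{Degrees of up-cells.} For $\up(i,j)$ the six up-neighbours are $\up(i\pm1,j)$, $\up(i,j\pm1)$, $\up(i+1,j-1)$ and $\up(i-1,j+1)$. The conditions for these to lie in $\Un$ are as follows: $\up(i+1,j)$ and $\up(i,j+1)$ need $c$; $\up(i-1,j)$ and $\up(i-1,j+1)$ need $a$; $\up(i,j-1)$ and $\up(i+1,j-1)$ need $b$. Together these give $2(a+b+c)$. The six down-neighbours are:
\begin{itemize}
\item $\dn(i,j)$, which needs $c$;
\item $\dn(i-1,j)$, which needs $a$;
\item $\dn(i,j-1)$, which needs $b$;
\item $\dn(i-1,j-1)$, which needs $ab$;
\item $\dn(i+1,j-1)$, which needs $j\ge1$ and $i+j\le n-2$, i.e.\ $bc$;
\item $\dn(i-1,j+1)$, which needs $ac$.
\end{itemize}
This adds $a+b+c+ab+bc+ca$, giving the stated formula.

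\emph{Degrees of down-cells.} For $\dn(i,j)$ the three up-cells $\up(i,j)$, $\up(i+1,j)$ and $\up(i,j+1)$ always exist by Lemma~\ref{lem:sideadj}. The remaining three up-neighbours, $\up(i+1,j+1)$, $\up(i-1,j+1)$ and $\up(i+1,j-1)$, need $c'$, $a$ and $b$ respectively. The six down-neighbours contribute $2(a+b+c')$ in the same way as for up-cells, with $c'$ in place of $c$. The total is $3+3(a+b+c')$. An interior cell has all indicators equal to $1$, so its degree is $9+3=12$ in both cases.

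\emph{Summing $P_2$.} I would partition $\Un$ by the triple $(a,b,c)$:
\begin{itemize}
\item the three corners, of degree $3$;
\item the three sides minus corners, $3(n-2)$ cells of degree $3\cdot2+1=7$;
\item the interior, $\binom{n-1}{2}$ cells of degree $12$.
\end{itemize}
For the corners, note that $\up(0,0)$ has $(a,b,c)=(0,0,1)$ when $n\ge2$, and $\up(n-1,0)$ has $a=1$ and $b=c=0$. Similarly I would partition $\Dn$ (which is a copy of $U_{n-1}$) by $(a,b,c')$:
\begin{itemize}
\item the corners, of degree $6$;
\item the side cells, of degree $9$;
\item the interior, $\binom{n-2}{2}$ cells of degree $12$.
\end{itemize}
Summing $\binom{\deg}{2}$ then gives
\[
3\binom32+3(n-2)\binom72+\binom{n-1}{2}\binom{12}{2}+3\binom62+3(n-3)\binom92+\binom{n-2}{2}\binom{12}{2},
\]
which should simplify to $66n^2-225n+198$.

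\emph{Main obstacle.} The hard part is the small-$n$ degeneracy. For $n=2$ the set $\Dn$ is a single cell with $a=b=c'=0$, so its degree is $3$, not $6$. Likewise the corner and side classes overlap or are empty for $n\le3$. I would therefore verify $n=2,3$ directly, which should give $P_2=18$ and $P_2=117$, and use the generic class count only for $n\ge4$.

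I would also double-check the up-down displacement list against the vertex sets in \eqref{eq:cells}, since an error there propagates into every coefficient. A useful sanity check is that half the total degree should equal $E(\Hp)=6(n-1)^2$.
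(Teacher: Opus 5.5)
Your degree formulas are derived correctly and by the paper's own route: the same neighbour lists from Lemma~\ref{lem:nbrtypes}, the same existence conditions, and the same boundary-class census. The gap is in the census itself, where both interior class sizes are wrong.

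The class $(a,b,c)=(1,1,1)$ in $\Un$ is $\{(i,j): i,j\ge 1,\ i+j\le n-2\}$. It has $\binom{n-2}{2}$ elements, not $\binom{n-1}{2}$; the latter is the number of interior \emph{vertices} of $\Vn$. Likewise the interior class of $\Dn$ has $\binom{n-3}{2}$ cells, not $\binom{n-2}{2}$. As written, your up-classes sum to $\binom{n+1}{2}+(n-2)$ and your down-classes to $\binom{n}{2}+(n-3)$. Your displayed sum therefore equals $66n^2-93n-132$, which exceeds the target by $66(2n-5)$, so the step ``which should simplify to $66n^2-225n+198$'' fails. With the correct sizes the sum is
\[
9+63(n-2)+33(n-2)(n-3)+45+108(n-3)+33(n-3)(n-4)=66n^2-225n+198,
\]
which is exactly the paper's computation. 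It is already valid at $n=3$, because there the interior of $\Un$ and the side and interior classes of $\Dn$ are simply empty, so no separate check is needed for that case.

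Your small-case value at $n=2$ is also wrong. For $n=2$ the graph $\Hp$ is $K_4$: the three up-cells pairwise share a vertex, and each shares an edge with $\dn(0,0)$. So $P_2=4\binom{3}{2}=12$, which agrees with $66\cdot 4-225\cdot 2+198$. The value $18$ you predict would actually contradict the lemma. Your value $117$ for $n=3$ is correct.

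Either sanity check you mention would have exposed the miscount: the class sizes must sum to $|\Un|$ and $|\Dn|$, and half the degree sum must equal $6(n-1)^2$. With your counts the latter is off by $6(2n-5)$.
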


\begin{proof}
From Lemma~\ref{lem:nbrtypes}, the up-neighbours of $\up(i,j)$ are
$\up(i\pm1,j)$, $\up(i,j\pm1)$, $\up(i+1,j-1)$, $\up(i-1,j+1)$, whose existence
conditions are $c$, $a$, $c$, $b$, $b$, $a$ respectively, giving $2(a+b+c)$; and
its down-neighbours $\dn(i,j)$, $\dn(i-1,j)$, $\dn(i,j-1)$, $\dn(i-1,j-1)$,
$\dn(i+1,j-1)$, $\dn(i-1,j+1)$ exist under $c$, $a$, $b$, $ab$, $bc$, $ca$,
giving $(a+b+c) + (ab+bc+ca)$. Similarly the down-neighbours of $\dn(i,j)$
contribute $2(a+b+c')$ and its up-neighbours $\up(i,j)$, $\up(i+1,j)$,
$\up(i,j+1)$, $\up(i+1,j+1)$, $\up(i-1,j+1)$, $\up(i+1,j-1)$ contribute
$3 + c' + a + b$.

Splitting $\Un$ and $\Dn$ by the vanishing indicators as in
Lemma~\ref{lem:P2side} gives, for $n$ large enough that no class is empty, the
census
\begin{center}
\begin{tabular}{llr@{\qquad}llr}
\toprule
\multicolumn{3}{c}{up-cells} & \multicolumn{3}{c}{down-cells}\\
\cmidrule(r){1-3}\cmidrule{4-6}
$(a,b,c)$ & count & $\deg$ & $(a,b,c')$ & count & $\deg$\\
\midrule
$(0,0,1)$ & $1$               & $3$  & $(0,0,1)$ & $1$               & $6$\\
$(1,0,1)$, $(0,1,1)$ & $2(n-2)$ & $7$  & $(1,0,1)$, $(0,1,1)$ & $2(n-3)$ & $9$\\
$(1,0,0)$, $(0,1,0)$ & $2$      & $3$  & $(1,0,0)$, $(0,1,0)$ & $2$      & $6$\\
$(1,1,1)$ & $\binom{n-2}{2}$  & $12$ & $(1,1,1)$ & $\binom{n-3}{2}$  & $12$\\
$(1,1,0)$ & $n-2$             & $7$  & $(1,1,0)$ & $n-3$             & $9$\\
\bottomrule
\end{tabular}
\end{center}
Then
\begin{align*}
  P_2(\Hp) &= \Bigl[9 + 63(n-2) + 33(n-2)(n-3)\Bigr]\\
           &\qquad + \Bigl[45 + 108(n-3) + 33(n-3)(n-4)\Bigr]\\
           &= 66n^2 - 225n + 198,
\end{align*}
and one checks directly that the identity persists down to $n = 2$ (where
$\Hp$ is $K_4$ and $P_2 = 12$).
\end{proof}

\begin{lemma}\label{lem:triangles}
Every triangle $\{c_1,c_2,c_3\}$ of $\Hp$ is of exactly one of two kinds.
\begin{enumerate}
  \item[(A)] The three cells share a vertex. That vertex is then unique.
  \item[(B)] They share no vertex. Then the three pairwise intersections are
    single vertices $r = c_1 \cap c_2$, $q = c_1 \cap c_3$, $p = c_2 \cap c_3$;
    the set $\{p,q,r\}$ is a cell $w$ of $\Tn$; and $\{c_1,c_2,c_3\}$ consists
    of the three cells across the three edges of $w$. The map
    $w \mapsto \{c_1,c_2,c_3\}$ is a bijection onto the triangles of kind (B),
    defined for those $w$ whose three side-neighbours all lie in $\Tn$.
\end{enumerate}
\end{lemma}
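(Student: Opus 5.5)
The plan is to work entirely with cells as $3$-element vertex sets, as in \eqref{eq:cells}, and use two elementary facts. First, two distinct cells meet in at most two vertices, and they meet in exactly two precisely when they share an edge. Second, any two distinct lattice vertices lie in at most two common cells, namely the two cells containing the edge they span when they are adjacent lattice points (Lemma~\ref{lem:bipartite}), and in none otherwise.

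For kind (A), uniqueness of the common vertex is immediate. If $c_1,c_2,c_3$ shared two vertices $u\ne v$, then $\{u,v\}$ would be an edge lying in three distinct cells, which contradicts Lemma~\ref{lem:bipartite}.

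For kind (B), assume $c_1\cap c_2\cap c_3=\emptyset$, and first show that each pairwise intersection is a single vertex. Suppose instead that $c_1\cap c_2=\{u,v\}$ is an edge. Then $c_1\cup c_2$ is a rhombus with four vertices $u,v,x_1,x_2$, where $x_k\in c_k$. Since $c_3$ meets both $c_1$ and $c_2$ but contains neither $u$ nor $v$ (a common vertex is excluded), $c_3$ must contain both $x_1$ and $x_2$. However, $x_1$ and $x_2$ are the opposite vertices across the short diagonal, at squared distance $3$ by \eqref{eq:norm}, and no unit cell contains two points at distance greater than $1$. This contradiction shows the intersections are single vertices $r,q,p$. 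They are pairwise distinct, again because there is no common vertex. Each $c_k$ contains exactly two of them: $c_1\supseteq\{r,q\}$, $c_2\supseteq\{r,p\}$, $c_3\supseteq\{q,p\}$. Hence $p,q,r$ are pairwise lattice-adjacent, so they span a unit equilateral triangle. Every unit lattice triangle whose vertices lie in $\Vn$ is a cell of $\Tn$, because $\Tn$ is convex and triangulated by the lattice lines; call this cell $w$. Now $c_1\ne w$, since otherwise $c_1$ would contain $p$ and $p$ would be a common vertex. So $c_1$ is the other cell on the edge $\{q,r\}$, that is, the cell across that edge of $w$, and likewise for $c_2$ and $c_3$.

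For the bijection, the construction shows that $w$ is determined as the cell spanned by the three pairwise intersections, which gives injectivity. Conversely, take a cell $w$ all three of whose side-neighbours lie in $\Tn$. Those three neighbours pairwise share exactly the corresponding vertex of $w$, and they have no common vertex, since any vertex of $w$ lies in only two of them and their outer vertices are distinct. So they form a kind (B) triangle. The two kinds are exclusive by definition.

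The step I expect to be the main obstacle is the rhombus argument excluding a shared edge in kind (B). I would carry it out concretely, using Lemma~\ref{lem:sideadj} and \eqref{eq:norm} to place $x_1$ and $x_2$ at squared distance $3$.
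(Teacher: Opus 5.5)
Your proposal is correct and follows essentially the same route as the paper. It uses the edge-in-three-cells contradiction for (A), the distance-$\sqrt3$ argument across the rhombus to rule out a shared edge, distinct pairwise intersections spanning a cell $w$, and identification of each $c_k$ as the cell across an edge of $w$. The only cosmetic difference is that you justify ``a unit lattice triangle with vertices in $\Vn$ is a cell of $\Tn$'' by convexity, where the paper checks it directly from \eqref{eq:cells}.
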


\begin{proof}
In case (A), three cells sharing two vertices would all contain one edge, and
only two cells ever do; so the common vertex is unique, and the two cases are
exclusive.

Suppose now that no vertex is common to all three cells.

\emph{Each pairwise intersection is a single vertex.} Suppose
$c_1 \cap c_2 = \{x,y\}$ had two elements. Then $c_1$ and $c_2$ are the two
cells on the edge $\{x,y\}$, say $c_1 = \{x,y,z_1\}$ and $c_2 = \{x,y,z_2\}$
with $z_1 \ne z_2$. Neither $x$ nor $y$ lies in $c_3$, else it would be common
to all three; since $c_3$ meets both $c_1$ and $c_2$, this forces
$z_1, z_2 \in c_3$. But $z_1$ and $z_2$ are the two apexes over an edge: taking
$x = 0$ and $y = e_1$ they are $e_2$ and $e_1 - e_2$, and by \eqref{eq:norm}
$|e_2 - (e_1-e_2)|^2 = |2e_2 - e_1|^2 = 4 + 1 - 2 = 3$. Two vertices of the unit
cell $c_3$ cannot be at distance $\sqrt3$, a contradiction.

\emph{The three vertices are distinct and form a cell.} If $r = q$ then that
vertex lies in $c_1$, $c_2$ and $c_3$, excluded; so $p,q,r$ are distinct. Now
$q$ and $r$ are distinct vertices of the unit cell $c_1$, so $|q-r| = 1$, and
likewise $|p-r| = |p-q| = 1$. Three lattice points pairwise at distance $1$ are
the vertices of a unit triangle of the triangulation. Its vertices lie in $\Vn$,
and a unit triangle with all vertices in $\Vn$ is a cell of $\Tn$: by
\eqref{eq:cells} the vertices of $\up(i,j)$ lie in $\Vn$ iff $i+j \le n-1$, i.e.\
iff $(i,j) \in \Un$, and those of $\dn(i,j)$ iff $i+j \le n-2$, i.e.\ iff
$(i,j) \in \Dn$. Call this cell $w$.

\emph{Identification.} If $c_1 = w$ then $p \in c_1$ as well, making $p$ common
to all three cells; so $c_i \ne w$ for each $i$. Since $c_1$ contains the edge
$\{q,r\}$ of $w$ and an edge lies in exactly two cells, $c_1$ is the cell across
$\{q,r\}$, and similarly for $c_2$ and $c_3$. Conversely, for any cell $w$ whose
three side-neighbours lie in $\Tn$, those three cells are pairwise
point-adjacent --- two of them share the vertex of $w$ on both of their edges
--- and have no common vertex, because the apex of the cell across an edge of
$w$ is not the opposite vertex of $w$. Finally a kind-(B) triple determines
$p,q,r$ as its pairwise intersections and hence determines $w$, so the
correspondence is bijective.
\end{proof}

\begin{lemma}\label{lem:Tpoint}
For $n \ge 2$,
$\displaystyle
  T(\Hp) = 3(n-1) + 10(n-1)(n-2) + \binom{n-2}{2} + \binom{n}{2} .
$
\end{lemma}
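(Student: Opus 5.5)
By Lemma~\ref{lem:triangles}, every triangle of $\Hp$ is of exactly one of the two kinds (A) and (B), and the kinds are exclusive. So I would write $T(\Hp) = T_A + T_B$ and count each kind separately by an elementary census. The formula should then split visibly: the terms $3(n-1) + 10(n-1)(n-2)$ should come from kind (A), and $\binom{n-2}{2} + \binom{n}{2}$ from kind (B).

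For kind (A), Lemma~\ref{lem:triangles} says the common vertex is unique. Each such triangle is therefore counted exactly once by summing, over all $v \in \Vn$, the quantity $\binom{k_v}{3}$, where $k_v$ is the number of cells of $\Tn$ containing $v$. Conversely, any three cells through a common vertex are pairwise point-adjacent, so every term in this sum really is a triangle. It remains to compute $k_v$.
\begin{itemize}
\item The $3$ corners lie in one cell each, so they contribute nothing.
\item Each of the $3(n-1)$ non-corner boundary vertices lies in exactly $3$ cells: two up-cells and one down-cell on the side of the boundary edge. Each contributes $\binom{3}{3} = 1$.
\item Each of the $\binom{n-1}{2}$ interior vertices lies in $6$ cells, and contributes $\binom{6}{3} = 20$.
\end{itemize}
This gives $T_A = 3(n-1) + 20\binom{n-1}{2} = 3(n-1) + 10(n-1)(n-2)$.

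To make the local counts airtight, I would check $k_v$ directly in coordinates. The vertex $(x,y)$ lies in $\up(x,y)$, $\up(x-1,y)$, $\up(x,y-1)$, $\dn(x-1,y)$, $\dn(x,y-1)$ and $\dn(x-1,y-1)$, each subject to its membership condition in $\Un$ or $\Dn$. I would then verify that exactly three of these survive on each open side of the triangle and all six survive in the interior.

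For kind (B), Lemma~\ref{lem:triangles} gives a bijection with the cells $w$ whose three side-neighbours all lie in $\Tn$.
\begin{itemize}
\item By Lemma~\ref{lem:sideadj}, every down-cell qualifies, which contributes $|\Dn| = \binom{n}{2}$.
\item An up-cell $\up(i,j)$ qualifies exactly when it has degree $3$ in $\Hs$, that is, when $i,j \ge 1$ and $i+j \le n-2$. This is the class of size $\binom{n-2}{2}$ already identified in the proof of Lemma~\ref{lem:P2side}.
\end{itemize}
Hence $T_B = \binom{n}{2} + \binom{n-2}{2}$, and adding $T_A$ gives the stated formula for $n \ge 2$.

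The main obstacle is only bookkeeping: making sure the vertex classification into corners, non-corner boundary vertices and interior vertices is exhaustive for small $n$. I would handle this by noting that each class count ($3$, $3(n-1)$, $\binom{n-1}{2}$) remains valid at $n = 2$, where it gives $T = 3 + 0 + 0 + 1 = 4$. This matches $\Hp = K_4$, which has $\binom{4}{3} = 4$ triangles.
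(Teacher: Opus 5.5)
Your proposal is correct and is essentially the paper's own proof: you split $T(\Hp)$ via Lemma~\ref{lem:triangles} into $\sum_{v}\binom{d_v}{3}$ (with the census $1$, $3$, $6$ at corners, non-corner boundary vertices and interior vertices, checked through the six candidate cells at $(x,y)$) plus the number of cells whose three side-neighbours lie in $\Tn$, namely $\binom{n}{2}$ down-cells and $\binom{n-2}{2}$ up-cells. Your $n=2$ check against $K_4$ is a useful sanity test, though the argument does not depend on it.
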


\begin{proof}
By Lemma~\ref{lem:triangles}, $T(\Hp) = \sum_{v \in \Vn} \binom{d_v}{3} + W(n)$
where $d_v$ is the number of cells containing $v$ and $W(n)$ is the number of
cells whose three side-neighbours all lie in $\Tn$.

The cells containing $(x,y)$ are $\up(x,y)$, $\up(x-1,y)$, $\up(x,y-1)$,
$\dn(x-1,y)$, $\dn(x,y-1)$ and $\dn(x-1,y-1)$, subject to
$\gamma = \ind{x+y \le n-1}$, $\alpha = \ind{x \ge 1}$, $\beta = \ind{y \ge 1}$,
$\alpha\gamma$, $\beta\gamma$ and $\alpha\beta$ respectively; so
$d_v = \alpha + \beta + \gamma + \alpha\beta + \beta\gamma + \gamma\alpha$,
which takes the value $1$ at the three corners of $\Tn$, the value $3$ at the
$3(n-1)$ non-corner boundary vertices, and the value $6$ at the
$\binom{n-1}{2}$ interior vertices. Since $\binom{1}{3} = 0$,
$\binom{3}{3} = 1$ and $\binom{6}{3} = 20$,
\[
  \sum_{v} \binom{d_v}{3} = 3(n-1) + 20\binom{n-1}{2} = 3(n-1) + 10(n-1)(n-2).
\]
For $W(n)$: by Lemma~\ref{lem:sideadj} all three side-neighbours of a down-cell
always exist, giving $\binom{n}{2}$; and $\up(i,j)$ has all of $\dn(i,j)$,
$\dn(i-1,j)$, $\dn(i,j-1)$ in $\Tn$ iff $i,j \ge 1$ and $i+j \le n-2$, giving
$\binom{n-2}{2}$.
\end{proof}

\begin{proposition}\label{prop:fixid-point}
For $n \ge 2$,
\begin{align*}
  \fix_{\mathrm{id}}(\Hp)
   &= \binom{n^2}{3} - 6(n-1)^2(n^2-2) + 66n^2 - 225n + 198 - T(\Hp)\\
   &= \frac{n^6}{6} - \frac{13n^4}{2} + 12n^3 + \frac{184n^2}{3}
      - 219n + 190 .
\end{align*}
\end{proposition}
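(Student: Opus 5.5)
The plan is to apply Lemma~\ref{lem:i3} to $\Hp$, exactly as Proposition~\ref{prop:fixid-side} did for $\Hs$, with these inputs:
\begin{itemize}
\item[] $N = n^2$;
\item[] $E = 6(n-1)^2$ from Lemma~\ref{lem:nbrtypes};
\item[] $P_2 = 66n^2-225n+198$ from Lemma~\ref{lem:P2point}, valid for $n \ge 2$;
\item[] $T = T(\Hp)$ from Lemma~\ref{lem:Tpoint}, also valid for $n\ge 2$.
\end{itemize}
Substituting these into $\binom{N}{3} - E(N-2) + P_2 - T$ gives the first displayed line verbatim. The restriction $n \ge 2$ comes only from the ranges of Lemmas~\ref{lem:P2point} and~\ref{lem:Tpoint}.

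The second line is a polynomial simplification, which I would carry out term by term. First,
\[
\binom{n^2}{3} = \frac{n^6 - 3n^4 + 2n^2}{6}.
\]
Next, $6(n-1)^2(n^2-2)$ expands to $6n^4 - 12n^3 - 6n^2 + 24n - 12$. For the triangle count,
\[
T(\Hp) = 3(n-1) + 10(n^2-3n+2) + \frac{(n-2)(n-3)}{2} + \frac{n(n-1)}{2}.
\]
This equals $11n^2 - 30n + 20$: the linear terms are $3 - 30 - 3 = -30$, and the constants are $-3 + 20 + 3 = 20$. Collecting everything, the coefficients are:
\begin{itemize}
\item[] $n^6$: $\tfrac16$;
\item[] $n^4$: $-\tfrac12 - 6 = -\tfrac{13}{2}$;
\item[] $n^3$: $+12$;
\item[] $n^2$: $\tfrac13 + 6 + 66 - 11 = \tfrac{184}{3}$;
\item[] $n$: $-24 - 225 + 30 = -219$;
\item[] constant: $12 + 198 - 20 = 190$.
\end{itemize}
These match the stated polynomial.

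There is no real obstacle, since all the structural work sits in the earlier lemmas. The only risk is arithmetic: sign errors in $-E(N-2)$, or double-counting in $T$. As a sanity check I would evaluate at $n=2$. There $\Hp = K_4$, so the count must be $0$, and the polynomial gives $\tfrac{64}{6} - 104 + 96 + \tfrac{736}{3} - 438 + 190 = 0$. I would also test $n=3$ against a direct count or against \texttt{A289224}$(4)$.
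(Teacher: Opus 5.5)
Your proposal is correct and is exactly the paper's proof: Lemma~\ref{lem:i3} applied with the inputs from Lemmas~\ref{lem:nbrtypes}, \ref{lem:P2point} and \ref{lem:Tpoint}, followed by expansion. Your term-by-term expansion, including $T(\Hp) = 11n^2 - 30n + 20$, and your $n=2$ check are both right, and they supply the arithmetic the paper leaves implicit.
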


\begin{proof}
Lemma~\ref{lem:i3} with Lemmas~\ref{lem:nbrtypes}, \ref{lem:P2point} and
\ref{lem:Tpoint}; the second expression is the expansion of the first.
\end{proof}

\section{The rotation term for point-adjacency}

This is the one place where the geometry does the work rather than the
combinatorics. Let $O$ be the centre of $\Tn$, the unique fixed point of $\rho$
in the plane; in coordinates $O = (n/3, n/3)$, a lattice vertex exactly when
$3 \mid n$.

\begin{lemma}\label{lem:locality}
If $c \cap \rho c \ne \emptyset$ and $v$ lies in the intersection, then either
$v = O$, or $|v - O| = 1/\sqrt3$ exactly.
\end{lemma}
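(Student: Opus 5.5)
The plan is to turn the hypothesis into a statement about two vertices of the single cell $c$, and then use the fact that $\rho$ is a genuine Euclidean rotation by $120^\circ$ about $O$.

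\emph{Step 1: reduce to two vertices of $c$.} Suppose $v \in c \cap \rho c$. Since $v \in \rho c$, the point $w = \rho^{-1} v$ is a vertex of $c$. So both $v$ and $\rho^{-1}v$ are vertices of the one cell $c$, and there are two cases.
\begin{itemize}
\item If $\rho^{-1} v = v$, then $v$ is a fixed point of $\rho$ in the plane, so $v = O$.
\item Otherwise $v$ and $\rho^{-1}v$ are distinct vertices of a unit equilateral triangle, so $|v - \rho^{-1}v| = 1$.
\end{itemize}

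\emph{Step 2: confirm $\rho$ is a rotation about $O$.} I would check that $\rho$ is exactly the rotation through $120^\circ$ about $O$ in the embedding $(x,y)\mapsto xe_1+ye_2$. The map $\rho$ is affine, with linear part $(a,b)\mapsto(b,-a-b)$ in oblique coordinates. By \eqref{eq:norm} this linear part preserves length:
\[
b^2 - b(a+b) + (a+b)^2 = a^2+ab+b^2 .
\]
It is therefore a linear isometry. Its square is not the identity but its cube is, since $\rho^3=\mathrm{id}$ from \eqref{eq:group}. Its determinant is $+1$, so it is orientation-preserving. A planar linear isometry with these properties is rotation through $\pm120^\circ$. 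Its unique fixed point is $O=(n/3,n/3)$, which one checks directly from \eqref{eq:group}. The same then holds for $\rho^{-1}$.

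\emph{Step 3: apply the chord formula.} For a rotation by angle $\theta$ about $O$, a point at distance $r$ from $O$ moves a distance $2r\sin(\theta/2)$. With $\theta=120^\circ$ this gives
\[
|v-\rho^{-1}v| = \sqrt3\,|v-O| .
\]
Combined with $|v-\rho^{-1}v|=1$ from Step 1, this yields $|v-O| = 1/\sqrt3$.

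Alternatively, $v$, $\rho^{-1}v$ and $\rho^{-2}v$ form an equilateral triangle of side $1$ centred at $O$, whose circumradius is $1/\sqrt3$.

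\emph{Expected obstacle.} There is no real obstacle. The only point needing care is Step 2, namely that the coordinate formula \eqref{eq:group} really is a Euclidean rotation about $O$ and not merely a lattice symmetry. The length computation via \eqref{eq:norm} settles this.
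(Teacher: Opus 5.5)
Your proof is correct and is essentially the paper's argument: both observe that $v$ and $\rho^{-1}v$ are vertices of the single cell $c$, treat the case $\rho^{-1}v = v$ (giving $v = O$) separately, and otherwise equate the unit side length with the chord $\sqrt3\,|v-O|$ of a $120^\circ$ rotation about $O$. Your Step~2 makes explicit, via \eqref{eq:norm} and the determinant, a fact the paper takes for granted, namely that the coordinate map $\rho$ is a Euclidean rotation about $O$.
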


\begin{proof}
From $v \in \rho c$ we get $\rho^{-1} v \in c$, and $v \in c$, so $v$ and
$\rho^{-1}v$ are both vertices of the unit cell $c$. If they coincide then $v$
is fixed by $\rho$, so $v = O$. Otherwise they are distinct vertices of a unit
triangle, hence $|v - \rho^{-1}v| = 1$. But $\rho^{-1}$ is a rotation through
$120^\circ$ about $O$, so $|v - \rho^{-1} v| = \sqrt3\,|v - O|$.
\end{proof}

\begin{lemma}\label{lem:shell}
Let $\Lambda = \mathbb{Z}e_1 + \mathbb{Z}e_2$ be the triangular lattice.
\begin{enumerate}
  \item The only $\lambda \in \Lambda$ with $|\lambda| \le 1/\sqrt3$ is
    $\lambda = 0$.
  \item If $g$ is the centroid of a cell, the only $\lambda \in \Lambda$ with
    $|\lambda - g| \le 1/\sqrt3$ are the three vertices of that cell, and each
    attains equality.
\end{enumerate}
\end{lemma}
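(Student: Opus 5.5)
Both parts reduce, via \eqref{eq:norm}, to the quadratic form $Q(a,b) = a^2+ab+b^2$, so the plan is to verify each claim as an elementary inequality on integer or shifted-integer pairs.

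\emph{Part (1).} Take $\lambda = ae_1 + be_2$ with $a,b \in \mathbb{Z}$, so that $|\lambda|^2 = Q(a,b) = \tfrac12\bigl(a^2 + b^2 + (a+b)^2\bigr)$. This is a nonnegative integer, and it vanishes only at $a=b=0$. If $\lambda \ne 0$, at least two of the three squares $a^2$, $b^2$, $(a+b)^2$ are nonzero, since any one nonzero forces another. Hence $Q(a,b) \ge 1 > \tfrac13$, and the only lattice point in the disc is $\lambda = 0$.

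\emph{Part (2).} By Lemma~\ref{lem:action} and translation invariance of $\Lambda$, it suffices to treat two model cells: an up-cell with centroid $g = \tfrac13(e_1+e_2)$, and a down-cell with centroid $g = \tfrac23(e_1+e_2)$. The second reduces to the first via the point reflection $\lambda \mapsto (e_1+e_2) - \lambda$, which preserves $\Lambda$ and exchanges the two centroids.

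For the up-cell I expand
\[
  |\lambda - g|^2 = Q\bigl(a - \tfrac13,\, b - \tfrac13\bigr) = Q(a,b) - (a+b) + \tfrac13 .
\]
The condition $|\lambda - g|^2 \le \tfrac13$ then becomes $Q(a,b) \le a+b$. Writing $s = a+b$, I would use $Q(a,b) = s^2 - ab$ together with $ab \le s^2/4$, which gives $Q \ge \tfrac34 s^2$. So $\tfrac34 s^2 \le s$, and hence $s \in \{0,1\}$.
\begin{itemize}
  \item If $s = 0$, then $Q \le 0$ forces $a = b = 0$.
  \item If $s = 1$, then $Q = 1 - ab \le 1$ forces $ab \ge 0$ with $a+b = 1$, so $(a,b) \in \{(1,0),(0,1)\}$.
\end{itemize}
The three solutions are exactly the vertices of $\up(0,0)$, and in each case $Q = a+b$, so equality $|\lambda - g| = 1/\sqrt3$ holds. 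This also matches the circumradius of a unit equilateral triangle.

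I expect no real obstacle. The only care needed is the reduction to one model cell, namely checking that the point reflection sends down-cells to up-cells and preserves $\Lambda$. If that step feels too slick, I would fall back on the direct computation $Q(a - \tfrac23, b - \tfrac23) = Q(a,b) - 2(a+b) + \tfrac43$ and a similar bound for the down-cell.
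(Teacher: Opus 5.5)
Your proof is correct and follows essentially the paper's route: you rewrite the condition through \eqref{eq:norm} as an inequality for $a^2+ab+b^2$ at shifted integer points, place the cell at $\{0,e_1,e_2\}$, and use a completing-the-square lower bound to leave finitely many candidates. The paper bounds $a$ and $b$ separately and discards $(1,1)$, while you simplify to $Q(a,b)\le a+b$ and bound $s=a+b$ instead. Your explicit point reflection $\lambda\mapsto e_1+e_2-\lambda$ also justifies the down-cell case, which the paper covers only by ``we may take the cell to be $\{0,e_1,e_2\}$''; the appeal to Lemma~\ref{lem:action} is unnecessary, since translations and that reflection already suffice.
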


\begin{proof}
By \eqref{eq:norm} the condition $|p e_1 + q e_2| \le 1/\sqrt3$ reads
$p^2 + pq + q^2 \le \tfrac13$. For (i), $p,q \in \mathbb{Z}$ makes the left side
a non-negative integer, so it is $0$ and $p = q = 0$.

For (ii) we may take the cell to be $\{0, e_1, e_2\}$, so $g = (e_1+e_2)/3$, and
write $\lambda = a e_1 + b e_2$ with $a,b \in \mathbb{Z}$. With
$p = a - \tfrac13$ and $q = b - \tfrac13$ the condition is
$p^2 + pq + q^2 \le \tfrac13$. Completing the square,
$p^2 + pq + q^2 = (p + q/2)^2 + \tfrac34 q^2 \ge \tfrac34 q^2$, so
$|q| \le 2/3$ and hence $b \in \{0,1\}$; the form is symmetric in $p$ and $q$,
so also $a \in \{0,1\}$. Of the four candidates, $(a,b) = (1,1)$ gives
$p = q = \tfrac23$ and $p^2+pq+q^2 = \tfrac43 > \tfrac13$, while $(0,0)$,
$(1,0)$ and $(0,1)$ each give exactly $\tfrac13$. These three are the vertices
$0$, $e_1$, $e_2$.
\end{proof}

\begin{proposition}\label{prop:fixrot-point}
Let $B(n)$ be the number of free $\rho$-orbits $\{c, \rho c, \rho^2 c\}$ that
are \emph{not} independent in $\Hp$. Then $B(1) = 0$, $B(n) = 1$ for $n \ge 2$
with $3 \nmid n$, and $B(n) = 2$ for $n \ge 3$ with $3 \mid n$. Hence
\[
  \fix_\rho(\Hp) = \frac{n^2 - \varepsilon_n}{3} - B(n).
\]
\end{proposition}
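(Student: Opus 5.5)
By Lemma~\ref{lem:fixedcells} the $\rho$-invariant $3$-sets are exactly the $(n^2-\varepsilon_n)/3$ free orbits. By the observation following that lemma, such an orbit fails to be independent in $\Hp$ precisely when $c \cap \rho c \ne \emptyset$. So the formula for $\fix_\rho(\Hp)$ follows once $B(n)$ is known, and the work is to enumerate the cells $c$ with $c \cap \rho c \ne \emptyset$, not fixed by $\rho$, and then divide by $3$. The plan is to locate a shared vertex $v \in c \cap \rho c$ via Lemma~\ref{lem:locality}, so that either $v = O$ or $|v-O| = 1/\sqrt3$. Lemma~\ref{lem:shell} then turns this into a finite list, split according to the position of $O$.

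The position of $O$ is governed by $n \bmod 3$. If $3 \mid n$, then $O$ is a lattice vertex. By Lemma~\ref{lem:shell}(i), applied to $v - O \in \Lambda$, no lattice vertex lies at distance $\le 1/\sqrt3$ from $O$ other than $O$ itself, so $v = O$. Hence $c$ is one of the six cells around $O$; these exist for $n \ge 3$, since $O=(n/3,n/3)$ is interior. Conversely, any cell containing $O$ has $\rho c$ also containing $O$, so all six qualify. None of them is $\rho$-fixed, because no cell is fixed when $3\mid n$. The six cells form two free orbits, the three up-cells and the three down-cells at $O$ (Lemma~\ref{lem:action}), giving $B(n)=2$.

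If $3 \nmid n$, then $O = (n/3,n/3)$ is the centroid of the fixed cell $z$ of Lemma~\ref{lem:fixedcells}. Indeed, the centroid of $\up(m,m)$ is $(m+\tfrac13,m+\tfrac13)$, and that of $\dn(m,m)$ is $(m+\tfrac23,m+\tfrac23)$. By Lemma~\ref{lem:shell}(ii), $v$ must be one of the three vertices of $z$, and $v=O$ is impossible since $O$ is not a lattice point. Moreover, $v$ and $\rho^{-1}v$ are then two distinct vertices of $z$, and both lie in $c$. Thus $c$ contains an edge of $z$ and is either $z$ (excluded, being fixed) or the cell across that edge. Conversely, each cell across an edge of $z$, when it exists, meets its $\rho$-image, which is the cell across the rotated edge, in a vertex of $z$. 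These three cells form a single orbit. The remaining check is existence: for $n \ge 2$ the three side-neighbours of $z$ must all lie in $\Tn$. For $z=\dn(m,m)$ this is automatic by Lemma~\ref{lem:sideadj}. For $z=\up(m,m)$ with $m = (n-1)/3 \ge 1$, the condition from the proof of Lemma~\ref{lem:Tpoint} reads $m \ge 1$ and $2m \le n-2$, and both hold once $n \ge 4$. So $B(n)=1$ for $n\ge2$ with $3\nmid n$, while for $n=1$ the cell $z$ has no neighbours and $B(1)=0$.

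I expect the main obstacle to be bookkeeping rather than ideas. The converse directions and the existence of the relevant cells must be checked carefully in the small cases $n=2,3,4$. One must also make sure that the six cells at $O$, or the three cells around $z$, are counted as orbits and not as cells, and that $z$ itself is excluded.
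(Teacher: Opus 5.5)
Your proof is correct and follows the paper's argument essentially step for step. When $3 \mid n$, Lemma~\ref{lem:locality} plus Lemma~\ref{lem:shell}(i) force the shared vertex to be $O$, which gives the two orientation classes of cells at $O$. When $3 \nmid n$, Lemma~\ref{lem:shell}(ii) forces $c$ to contain an edge of the fixed cell $z$, which gives the single orbit of its side-neighbours. The differences are cosmetic. You locate $O$ as the centroid of $z$ by direct computation, where the paper uses the cyclic action of $\rho$ on the vertices of $z$. You also check that the side-neighbours of $\up(m,m)$ exist via the criterion from Lemma~\ref{lem:Tpoint}, where the paper lists the three cells explicitly.
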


\begin{proof}
As noted after Lemma~\ref{lem:fixedcells}, a free orbit fails to be independent
exactly when $c \cap \rho c \ne \emptyset$.

Suppose $3 \mid n$, so $O$ is a lattice vertex, interior as soon as
$n/3 \ge 1$ and $2n/3 \le n-1$, i.e.\ $n \ge 3$. By
Lemmas~\ref{lem:locality} and \ref{lem:shell}(i) a bad cell must contain $O$,
and there are exactly $d_O = 6$ such cells. The rotation $\rho$ permutes them
with no fixed cell (Lemma~\ref{lem:fixedcells}) and preserves orientation
(Lemma~\ref{lem:action}), so they form exactly two free orbits, one of the three
up-cells at $O$ and one of the three down-cells. In each, all three cells
contain $O$ and so are pairwise point-adjacent. Thus $B(n) = 2$.

Suppose instead $3 \nmid n$, and let $z$ be the fixed cell of
Lemma~\ref{lem:fixedcells}. Then $O \in z$, and $\rho$ permutes the vertices of
$z$ cyclically, so $O$ is the centroid of $z$. Let $c$ be a bad cell and
$v \in c \cap \rho c$. Since $O$ is not a lattice vertex,
Lemmas~\ref{lem:locality} and \ref{lem:shell}(ii) give that $v$ is a vertex of
$z$, as is $\rho^{-1} v \ne v$; both lie in $c$, so $c$ contains an edge of $z$.
The cells containing an edge of $z$ are $z$ itself and the three cells across its
edges. Now $z$ is the fixed cell, not a member of a free orbit, and the other
three form a single free orbit, since $\rho$ permutes the edges of $z$
cyclically; they are pairwise point-adjacent because two of them share the
vertex of $z$ lying on both of their edges. It remains to check that all three
lie in $\Tn$. If $n \equiv 1 \pmod 3$ and $m = (n-1)/3$, they are $\dn(m,m)$,
$\dn(m-1,m)$ and $\dn(m,m-1)$; the first needs $2m \le n-2$ and the others need
$m \ge 1$, and both conditions amount to $n \ge 4$. If $n \equiv 2 \pmod 3$ and
$m = (n-2)/3$, they are $\up(m,m)$, $\up(m+1,m)$ and $\up(m,m+1)$, which need
$2m+1 \le n-1$, and that already follows from $(m,m) \in \Dn$. So $B(n) = 1$ for
$n \ge 2$, while for $n = 1$ we have $m = 0$ and none of the three cells exists,
so $B(1) = 0$.
\end{proof}

\section{The reflection term for point-adjacency}

This section is needed only for Corollary~\ref{cor:ludwig}.

\begin{lemma}\label{lem:axis-point}
The subgraph of $\Hp$ induced on the axis cells is the path
$c_1 c_2 \cdots c_n$ in the order \eqref{eq:axisorder}. A mirror pair
$\{c, \sigma c\}$ is point-adjacent internally exactly when $c$ has index pair
$(i,j)$ with $|i-j| = 1$, and there are $n-1$ such pairs.
\end{lemma}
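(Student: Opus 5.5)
The plan is to read everything off Lemma~\ref{lem:nbrtypes}, which lists point-adjacency as a finite set of index differences. There are two parts: adjacency among axis cells, and internal adjacency of mirror pairs.

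\emph{Axis cells.} The axis cells are $\up(i,i)$ and $\dn(i,i)$. For two up-cells $\up(i,i)$ and $\up(k,k)$ the difference is $(k-i,k-i)$. This is never in $\{\pm(1,0),\pm(0,1),\pm(1,-1)\}$ unless $k=i$, so no two axis up-cells are adjacent, and the same holds for two axis down-cells. For $\up(i,i)$ and $\dn(k,k)$ the difference $(k-i,k-i)$ lies in the up--down list exactly for $(0,0)$ and $(-1,-1)$. So $\up(i,i)$ is adjacent to $\dn(i,i)$ and to $\dn(i-1,i-1)$, and to nothing else on the axis. In the order \eqref{eq:axisorder}, $\dn(i-1,i-1)$ immediately precedes $\up(i,i)$ and $\dn(i,i)$ immediately follows it. Hence the induced subgraph is exactly the path $c_1c_2\cdots c_n$; every consecutive pair exists because both cells are axis cells of $\Tn$.

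\emph{Mirror pairs.} Let $c$ be off-axis, so by Lemma~\ref{lem:action} $\sigma c$ has the same orientation and swapped indices. For $c=\up(i,j)$ the difference is $(j-i,\,i-j)$. Lemma~\ref{lem:nbrtypes} puts this in the same-orientation list iff it equals $\pm(1,-1)$, that is, iff $|i-j|=1$. The differences $\pm(1,0)$ and $\pm(0,1)$ cannot occur, since the two coordinates are negatives of each other. The identical argument applies to down-cells.

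\emph{Counting.} Each unordered pair is represented once by taking $j=i+1$. For up-cells we need $(i,i+1)\in\Un$, i.e.\ $2i+1\le n-1$, which gives $\lfloor n/2\rfloor$ choices. For down-cells we need $2i+1\le n-2$, which gives $\lfloor (n-1)/2\rfloor$ choices. Since $\lfloor n/2\rfloor+\lfloor (n-1)/2\rfloor=n-1$, there are $n-1$ such pairs.

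None of these steps is a real obstacle; the whole argument is a direct specialisation of Lemma~\ref{lem:nbrtypes}. The one place needing care is the path claim: we must confirm that the adjacency $\up(i,i)\sim\dn(i-1,i-1)$ from the difference $(-1,-1)$ links consecutive terms of \eqref{eq:axisorder}, and that no longer-range adjacency along the axis is possible.
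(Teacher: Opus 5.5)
Your proof is correct and follows the paper's argument essentially line for line. Both read the axis adjacencies and the internal mirror-pair adjacencies directly off the difference lists of Lemma~\ref{lem:nbrtypes}, and both count the pairs as $\lfloor n/2\rfloor+\lfloor (n-1)/2\rfloor=n-1$, with your differences $(0,0)$ and $(-1,-1)$ giving exactly the paper's two path links, $\up(i,i)\sim\dn(i,i)$ along an edge and $\up(i,i)\sim\dn(i-1,i-1)$ at the vertex $(i,i)$.
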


\begin{proof}
$\up(i,i)$ and $\dn(i,i)$ share an edge; $\dn(i,i)$ and $\up(i+1,i+1)$ share the
vertex $(i+1,i+1)$; and by Lemma~\ref{lem:nbrtypes} no other pair of axis cells
is point-adjacent, since the admissible index differences for two up-cells or
two down-cells never have both coordinates equal and nonzero, and the up-down
differences $(0,0)$, $(-1,0)$, $(0,-1)$, $(-1,-1)$, $(1,-1)$, $(-1,1)$ applied
to $(i,i)$ and $(i',i')$ force $i' - i \in \{0,1\}$.

A mirror pair consists of two cells of the same orientation with index pairs
$(i,j)$ and $(j,i)$, so by Lemma~\ref{lem:nbrtypes} they are point-adjacent iff
$(j-i, i-j)$ lies in $\{\pm(1,0), \pm(0,1), \pm(1,-1)\}$, i.e.\ iff
$|i - j| = 1$. These pairs are $\{\up(i,i+1), \up(i+1,i)\}$ with
$2i+1 \le n-1$, of which there are $\lfloor n/2 \rfloor$, and
$\{\dn(i,i+1), \dn(i+1,i)\}$ with $2i+1 \le n-2$, of which there are
$\lfloor (n-1)/2 \rfloor$; the two counts sum to $n-1$.
\end{proof}

\begin{proposition}\label{prop:fixref-point}
For $n \ge 2$,
$\displaystyle
  \fix_\sigma(\Hp) = \binom{n-2}{3} + \frac{n(n-1)(n-2)}{2} - 2n + 5
    - \ind{n \text{ even}} .
$
\end{proposition}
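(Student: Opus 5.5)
The proof follows the decomposition of Lemma~\ref{lem:mirrorpairs}, in the same way as Proposition~\ref{prop:fixref-side}. A $\sigma$-invariant $3$-set is either three axis cells or one axis cell $a$ together with one mirror pair $\{c,\sigma c\}$. I would count the two kinds separately.

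\emph{Three axis cells.} By Lemma~\ref{lem:axis-point} the axis induces the path $c_1c_2\cdots c_n$. So this term is the number of $3$-subsets of $\{1,\dots,n\}$ with no two consecutive elements. The standard gap-compression bijection $k_1<k_2<k_3 \mapsto k_1<k_2-1<k_3-2$ shows this number is $\binom{n-2}{3}$, which is the first summand.

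\emph{Axis cell plus mirror pair.} For such a set to be independent, two conditions are needed. First, the pair must be internally non-adjacent; by Lemma~\ref{lem:axis-point} this removes the $n-1$ pairs with $|i-j|=1$ from the $(n^2-n)/2$ mirror pairs. Second, we need $a\not\sim c$, and by Lemma~\ref{lem:mirrorpairs} it does not matter which of $c,\sigma c$ is tested. Ignoring the second condition gives $n\bigl(\tfrac{n^2-n}{2}-(n-1)\bigr)=\tfrac{n(n-1)(n-2)}{2}$. It therefore remains to show that the number $X$ of incidences ($a$, good pair $\{c,\sigma c\}$) with $a\sim c$ equals $2n-5+\ind{n\text{ even}}$.

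\emph{Computing $X$.} For an axis cell I would read its off-axis point-neighbours from Lemma~\ref{lem:nbrtypes}. They group into mirror pairs, and only pairs whose index pairs $(i,j)$ satisfy $|i-j|\ge 2$ count, since pairs with $|i-j|=1$ were already discarded.
\begin{itemize}
\item For $\up(i,i)$ the off-axis neighbours are $\up(i\pm1,i)$, $\up(i,i\pm1)$ and $\dn(i-1,i)$, $\dn(i,i-1)$, all with $|i-j|=1$, together with $\up(i+1,i-1)$, $\up(i-1,i+1)$ and $\dn(i+1,i-1)$, $\dn(i-1,i+1)$, both pairs with $|i-j|=2$. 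So generically $\up(i,i)$ meets exactly two good pairs.
\item The same computation for $\dn(i,i)$ gives the up-pair $\{\up(i+2,i),\up(i,i+2)\}$ and the down-pair $\{\dn(i+1,i-1),\dn(i-1,i+1)\}$, again generically two.
\end{itemize}
Interior axis cells thus contribute $2$ each. One then has to check existence at both ends of the axis. Near the corner, $i=0$ kills the pairs needing index $i-1$. Near the opposite side, the constraints $i+j\le n-1$ or $i+j\le n-2$ kill the others. The surviving count should come to $2n$ minus a boundary deficit of $5-\ind{n\text{ even}}$, the parity entering through whether the last axis cell is an up- or a down-cell.

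\emph{Main obstacle.} I expect the hardest step to be exactly this boundary bookkeeping. The plan is to tabulate, for each axis cell, which of its two candidate pairs exist, and to do so separately for $n$ even and odd. I would then sum and confirm the result against $n=2,3,4$ by hand; for $n=2$, $\Hp=K_4$ and $\fix_\sigma=0$.
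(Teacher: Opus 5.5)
Your framework is the paper's: $\binom{n-2}{3}$ from the axis path, then $n\cdot\tfrac{(n-1)(n-2)}{2}$ combinations of an axis cell with a good mirror pair, minus the incidence count $X$, with two good pairs at a generic axis cell. The gap is in $X$, the one computation with real content. Your neighbour list for down-cells is wrong, and the error corrupts exactly the boundary bookkeeping you defer.

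The cell $\up(i+2,i)=\{(i+2,i),(i+3,i),(i+2,i+1)\}$ is disjoint from $\dn(i,i)=\{(i+1,i),(i,i+1),(i+1,i+1)\}$. Indeed, the displacement $(-2,0)$ is not among the six up--down displacements of Lemma~\ref{lem:nbrtypes}. The pair $\{\up(i+2,i),\up(i,i+2)\}$ is the good up-pair of the \emph{next} axis cell $\up(i+1,i+1)$. The good up-pair of $\dn(i,i)$ is $\{\up(i+1,i-1),\up(i-1,i+1)\}$, which meets it in $(i+1,i)$ and $(i,i+1)$; it is the same up-pair as for $\up(i,i)$.

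The generic count $2$ survives this, but the existence conditions do not. Your pair exists iff $2i+2\le n-1$, so in particular at $i=0$ for every $n\ge 3$, whereas the correct one exists iff $i\ge 1$. Carried through, your list gives $X=2n-4$ for all $n\ge3$, which is off by one for odd $n$. At $n=3$ it has $\dn(0,0)$ meeting the unique good pair $\{\up(2,0),\up(0,2)\}$, and so gives $\fix_\sigma=1$. In fact both $\up(0,0)$ and $\dn(0,0)$ complete that pair independently, so $\fix_\sigma=2$.

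With the correct lists the bookkeeping is short. For $\up(i,i)$, the up-pair exists iff $i\ge1$, and the down-pair $\{\dn(i+1,i-1),\dn(i-1,i+1)\}$ exists iff $i\ge1$ and $2i\le n-2$. For $\dn(i,i)$, both pairs exist iff $i\ge 1$. Summing over the $\lceil n/2\rceil$ up-cells and $\lfloor n/2\rfloor$ down-cells on the axis gives $X=(\lceil n/2\rceil-1)+3(\lfloor n/2\rfloor-1)=2n-5+\ind{n \text{ even}}$. The deficit is $4$ at the corner, where $\up(0,0)$ and $\dn(0,0)$ lose both pairs. For odd $n$ there is one more, because the last axis cell $\up(\tfrac{n-1}{2},\tfrac{n-1}{2})$ loses its down-pair; this is exactly the parity effect you anticipated.

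Your proposed hand check at $n=3$ would have exposed the error. As written, though, the proposal asserts the target value of $X$ rather than deriving it, and the data it would derive it from are incorrect.
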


\begin{proof}
By Lemma~\ref{lem:mirrorpairs} there are two cases. Three axis cells,
independent in the path of Lemma~\ref{lem:axis-point}: the number of
independent $3$-subsets of a path on $n$ vertices is $\binom{n-2}{3}$.

For the second case, Lemma~\ref{lem:axis-point} leaves
$G = (n^2-n)/2 - (n-1)$ internally independent mirror pairs, so the count is
$nG - X$ where $X$ is the number of pairs (axis cell $a$, internally independent
mirror pair $P$) with $a$ point-adjacent to $P$. An interior axis cell has
degree $12$, of which two neighbours are its path-neighbours on the axis, so its
other ten neighbours form five mirror pairs; reading them off
Lemma~\ref{lem:P2point}, for $a = \up(i,i)$ they are
\[
  \{\up(i{+}1,i),\up(i,i{+}1)\},\quad
  \{\up(i{-}1,i),\up(i,i{-}1)\},\quad
  \{\up(i{+}1,i{-}1),\up(i{-}1,i{+}1)\},
\]
\[
  \{\dn(i{-}1,i),\dn(i,i{-}1)\},\quad
  \{\dn(i{+}1,i{-}1),\dn(i{-}1,i{+}1)\},
\]
and three of these have $|i-j| = 1$, hence are not internally independent. So an
interior axis cell contributes $2$, and the same computation for $a = \dn(i,i)$
gives $2$ as well. Accounting for the two ends of the axis, where some of the
cells just listed fall outside $\Tn$, leaves
$X(n) = 2n - 5 + \ind{n \text{ even}}$. Since $nG = n(n-1)(n-2)/2$, the claim
follows.
\end{proof}

\section{Assembly}\label{sec:assembly}

\begin{proof}[Proof of Theorems \ref{thm:730}, \ref{thm:731} and \ref{thm:732}]
The values at $n = 1$ are $0$ because $T_1$ has a single cell. For $n \ge 2$,
substitute into \eqref{eq:burnside}. For $C_3$ and side-adjacency,
Propositions~\ref{prop:fixid-side} and \ref{prop:fixrot-side} give
\[
  \texttt{A397730}(n)
   = \frac13\Bigl[\binom{n^2}{3} - \tfrac32 n(n-1)(n^2-2) + 3(n-1)^2
       + \tfrac23\bigl(n^2-\varepsilon_n\bigr)\Bigr],
\]
and expanding, with $\varepsilon_n = 1 - \ind{3 \mid n}$, yields the stated
quasi-polynomial. Adding $3\fix_\sigma(\Hs)$ from
Proposition~\ref{prop:fixref-side} and dividing by $6$ instead gives
Theorem~\ref{thm:731}, where $\lfloor n/2 \rfloor = (n - \ind{n \text{ odd}})/2$
produces the period-$2$ part. For $C_3$ and point-adjacency,
Propositions~\ref{prop:fixid-point} and \ref{prop:fixrot-point} give
Theorem~\ref{thm:732}.

It remains to pass from the closed forms to \eqref{eq:c1}--\eqref{eq:c3}. Fix
one of the three sequences, write $a(n)$ for its terms and let
$F \in \{F_1,F_2,F_3\}$ be its generating function. For $n \ge 2$ our $a(n)$ is
a quasi-polynomial of degree $d = 6$ and period $p$, with $p = 3$, $6$, $3$ in
the three cases, and $a(1) = 0$. Split the sum by the residue of $n$ modulo $p$
and use the elementary fact that $\sum_{k \ge 0} Q(k) y^k$ is
$A(y)/(1-y)^{d+1}$ for a polynomial $A$ of degree at most $d$ whenever $Q$ is a
polynomial of degree $d$ (see e.g.\ \cite{Stanley}); this gives
\begin{equation}\label{eq:Ppoly}
  \bigl(1-x^p\bigr)^7 F(x) = P(x)
\end{equation}
for a polynomial $P$ with $\deg P \le \max_r \bigl(p k_r + r\bigr) + pd$, where
$pk_r + r$ is the least $n \ge 2$ in the residue class $r$. This bound is $22$
for $p = 3$ and $43$ for $p = 6$.

So the asserted identity $F = N/D$, with $N$ and $D$ the numerator and
denominator of the relevant conjecture, is equivalent to the one polynomial
identity
\begin{equation}\label{eq:certificate}
  P(x)\,D(x) - N(x)\,\bigl(1-x^p\bigr)^7 = 0 ,
\end{equation}
because $D$ divides $(1-x^p)^7$ in each of the three cases: $(1-x^3)^7 =
(1-x)^7(1+x+x^2)^7$ contains $(1+x+x^2)(1-x)^7$, and $(1-x^6)^7$ contains
$(x^2+x+1)(x+1)^2(1-x)^7$. Both sides of \eqref{eq:certificate} are polynomials
with integer coefficients of bounded degree, so this is a finite exact
computation and nothing more. Carrying it out --- the coefficients of $P$ are
listed by the verification script of Section~\ref{sec:verification} --- gives
$0$ in all three cases.
\end{proof}

\begin{proof}[Proof of Corollary \ref{cor:ludwig}]
The first formula is Proposition~\ref{prop:fixid-point}, and expanding
$\texttt{A289224}(n+1)$ from the closed form recorded at that entry gives the
same polynomial. For the second, \eqref{eq:burnside} with $G = D_3$ and
$H = \Hp$ gives
\[
  \frac16\Bigl[\fix_{\mathrm{id}}(\Hp) + 2\fix_\rho(\Hp)
    + 3\fix_\sigma(\Hp)\Bigr]
\]
with the three terms supplied by Propositions~\ref{prop:fixid-point},
\ref{prop:fixrot-point} and \ref{prop:fixref-point}; expanding in each of the
six residue classes modulo $6$ and comparing with $\texttt{A289230}(n+1)$ gives
equality in all six.
\end{proof}

\begin{remark}
The leading terms are a sanity check on the whole computation. The unrestricted
$C_3$- and $D_3$-orbit counts \texttt{A396285} and \texttt{A396761} are
asymptotically $n^6/18$ and $n^6/36$, and the adjacency conditions exclude only
$O(n^4)$ triples, so the answers have to carry exactly those leading
coefficients. They do, in all three theorems. Evaluating the conjectured
generating functions the same way gives $120/(3 \cdot 720)$,
$240/(12 \cdot 720)$ and $120/(3 \cdot 720)$ --- the same three numbers, as
they had better be.
\end{remark}

\section{Verification}\label{sec:verification}

The proofs above are self-contained. But the bookkeeping is heavy enough that we
checked every step computationally anyway, on a code path independent of the
derivation. Two scripts accompany the paper.

\texttt{derive.py} reproduces the derivation symbolically in exact rational
arithmetic: it assembles each fixed-point term from the lemma statements (the
degree-class censuses of Lemmas~\ref{lem:P2side} and \ref{lem:P2point}, the
vertex census and the decomposition of Lemma~\ref{lem:triangles}), applies
\eqref{eq:burnside}, sums the quasi-polynomials into a rational function and
compares it with \eqref{eq:c1}--\eqref{eq:c3}.

\texttt{verify\_independent.py}, listed in Appendix~\ref{app:code}, shares no
code at all with \texttt{derive.py} and uses only the Python standard library.
It performs six checks.

\begin{enumerate}
  \item It rebuilds $\Tn$ from the definition \eqref{eq:cells} --- cells as sets
    of vertices, adjacency as an intersection size, the group as the affine maps
    \eqref{eq:group} on vertices --- and counts orbits of independent $3$-sets by
    \eqref{eq:burnside} directly, for $n \le 12$. All three sequences agree with
    the archived OEIS data.
  \item It recomputes $E$, $P_2$ and $T$ from the adjacency lists for
    $n \le 12$ and compares with Lemmas~\ref{lem:sideadj}, \ref{lem:P2side},
    \ref{lem:nbrtypes}, \ref{lem:P2point} and \ref{lem:Tpoint}. It confirms in
    particular that $T(\Hs) = 0$.
  \item It expands \eqref{eq:c1}--\eqref{eq:c3} as power series in exact
    rational arithmetic and compares every archived term: $100$, $100$ and $85$
    respectively.
  \item It evaluates the closed forms of
    Theorems~\ref{thm:730}--\ref{thm:732} over the same ranges, and compares
    them both with the data and with the series coefficients from the previous
    check.
  \item It recomputes $P(x)$ of \eqref{eq:Ppoly} in integer arithmetic, verifies
    the degree bound, and verifies the certificate \eqref{eq:certificate}.
    Finally it checks Corollary~\ref{cor:ludwig} against the formulas recorded
    at \texttt{A289224} and \texttt{A289230}.
  \item It runs the same machinery with the adjacency condition dropped, where
    the answers are known and proved, and recovers \texttt{A396285} and
    \texttt{A396761} for $n \le 10$. That calibrates the parts easiest to get
    wrong --- the orbit bookkeeping and the mirror-pair count --- against
    formulas we did not derive.
\end{enumerate}

All checks pass. The comparison data are the b-files of the five OEIS entries,
archived alongside the scripts so that verification needs no network access, and
the whole run takes well under a minute on one core. We stress that agreement
with the tabulated terms is a cross-check on the algebra and not evidence for
the theorems. The theorems are proved in Sections~2--9, and the conjectures
\eqref{eq:c1}--\eqref{eq:c3} follow from the finite identity
\eqref{eq:certificate}.

\appendix

\section{The verification script}\label{app:code}

\begin{lstlisting}[style=python]
"""Independent verification of the three theorems on A397730, A397731, A397732.

This script shares NO code with the derivation (`derive.py`): it uses only the
Python standard library (no SymPy), it rebuilds the triangular grid from the
combinatorial definition rather than from any formula, and it implements its own
exact polynomial arithmetic over the integers.

What it checks, in order:

  A. Brute force from first principles.  Cells are frozensets of lattice
     vertices; two cells are side-adjacent when their vertex sets meet in two
     points and point-adjacent when they meet at all; the group acts by an
     affine map on vertices.  Orbits of pairwise non-adjacent 3-subsets are
     counted by Burnside's lemma over C3 and D3 and compared with the archived
     OEIS b-files.
  B. The graph invariants E, P2, T are recomputed directly from the adjacency
     lists and compared with the paper's closed forms (Lemmas 2.1, 5.2, 5.3, 5.5).
  C. Irvine's three conjectured generating functions are expanded as formal
     power series in exact `Fraction` arithmetic and compared with every
     archived b-file term (100 / 100 / 85).
  D. The paper's closed quasi-polynomial forms (Theorems 1-3) are evaluated and
     compared with the b-files and with the series coefficients of (C).
  E. The rational-function certificate: P(x) = (1-x^p)^7 F(x) is computed from
     the closed forms, its degree bound is checked, and the finite exact integer
     polynomial identity  P * D_Irvine - N_Irvine * (1-x^p)^7 == 0  is verified.
  F. Cross-check against Heinrich Ludwig's independently derived 2017 formulas
     for OEIS A289224 and A289230.

Run:
    uv run python output/math/triangular-grid/a397730-nonadjacent-triples/code/verify_independent.py
Optional: `--nmax K` changes the brute-force range (default 12; 13 takes ~20 s).
"""
from __future__ import annotations

import argparse
import itertools
import json
from fractions import Fraction
from math import comb
from pathlib import Path

DATA = Path(__file__).resolve().parent / "data"

# --------------------------------------------------------------------------- #
# A. the combinatorial definition, rebuilt from scratch
# --------------------------------------------------------------------------- #


def grid_cells(n):
    """The n^2 unit cells of T_n, each as a frozenset of its three vertices."""
    up = [frozenset({(i, j), (i + 1, j), (i, j + 1)})
          for i in range(n) for j in range(n - i)]
    down = [frozenset({(i + 1, j), (i, j + 1), (i + 1, j + 1)})
            for i in range(n - 1) for j in range(n - 1 - i)]
    return up + down


def group_maps(n, with_reflections):
    """D3 (or C3) as affine maps on lattice vertices."""
    def rho(p):
        return (p[1], n - p[0] - p[1])

    def rho2(p):
        return rho(rho(p))

    def sigma(p):
        return (p[1], p[0])

    maps = [lambda p: p, rho, rho2]
    if with_reflections:
        maps += [sigma, lambda p: rho(sigma(p)), lambda p: rho2(sigma(p))]
    return maps


def adjacency_lists(cells, min_shared):
    """Neighbour sets under 'the two cells share at least `min_shared` vertices'."""
    n_cells = len(cells)
    nbr = [set() for _ in range(n_cells)]
    for a, b in itertools.combinations(range(n_cells), 2):
        if len(cells[a] & cells[b]) >= min_shared:
            nbr[a].add(b)
            nbr[b].add(a)
    return nbr


def orbit_count(n, min_shared, with_reflections):
    """Burnside count of orbits of pairwise non-adjacent 3-subsets of cells."""
    cells = grid_cells(n)
    index = {c: k for k, c in enumerate(cells)}
    nbr = adjacency_lists(cells, min_shared)

    def independent(triple):
        a, b, c = triple
        return b not in nbr[a] and c not in nbr[a] and c not in nbr[b]

    total = 0
    for g in group_maps(n, with_reflections):
        perm = [index[frozenset(map(g, cell))] for cell in cells]
        for triple in itertools.combinations(range(len(cells)), 3):
            if tuple(sorted(perm[k] for k in triple)) == triple and independent(triple):
                total += 1
    order = 6 if with_reflections else 3
    assert total % order == 0, (n, min_shared, with_reflections, total)
    return total // order


def graph_invariants(n, min_shared):
    """(E, P2, T) computed straight from the adjacency lists."""
    cells = grid_cells(n)
    nbr = adjacency_lists(cells, min_shared)
    edges = sum(len(s) for s in nbr) // 2
    paths2 = sum(comb(len(s), 2) for s in nbr)
    triangles = sum(1 for a, b, c in itertools.combinations(range(len(cells)), 3)
                    if b in nbr[a] and c in nbr[a] and c in nbr[b])
    return edges, paths2, triangles


# --------------------------------------------------------------------------- #
# the paper's claimed closed forms (Theorems 1-3), transcribed verbatim
# --------------------------------------------------------------------------- #

F = Fraction


def thm_A397730(n):
    return (F(n ** 6, 18) - F(2 * n ** 4, 3) + F(n ** 3, 2) + F(7 * n ** 2, 3)
            - 3 * n + F(7, 9) + (F(2, 9) if n % 3 == 0 else 0))


def thm_A397731(n):
    return (F(n ** 6, 36) - F(n ** 4, 3) + F(7 * n ** 3, 12) + F(5 * n ** 2, 12)
            - F(4 * n, 3) + F(8, 9)
            + (F(n, 4) - F(1, 2) if n % 2 == 1 else 0)
            + (F(1, 9) if n % 3 == 0 else 0))


def thm_A397732(n):
    return (F(n ** 6, 18) - F(13 * n ** 4, 6) + 4 * n ** 3 + F(62 * n ** 2, 3)
            - 73 * n + F(562, 9) - (F(4, 9) if n % 3 == 0 else 0))


# the paper's closed forms for the individual Burnside terms (Section 2-6)
def lemma_E_side(n):
    return 3 * n * (n - 1) // 2


def lemma_P2_side(n):
    return 3 * (n - 1) ** 2


def lemma_E_point(n):
    return 6 * (n - 1) ** 2


def lemma_P2_point(n):
    return 66 * n * n - 225 * n + 198


def lemma_T_point(n):
    return (3 * (n - 1) + 10 * (n - 1) * (n - 2)
            + (comb(n - 2, 2) if n >= 2 else 0) + comb(n, 2))


# --------------------------------------------------------------------------- #
# exact integer polynomial arithmetic (own implementation)
# --------------------------------------------------------------------------- #


def poly_mul(a, b):
    out = [0] * (len(a) + len(b) - 1)
    for i, x in enumerate(a):
        if x:
            for j, y in enumerate(b):
                out[i + j] += x * y
    return out


def poly_pow(a, k):
    out = [1]
    for _ in range(k):
        out = poly_mul(out, a)
    return out


def poly_sub(a, b):
    m = max(len(a), len(b))
    return [(a[i] if i < len(a) else 0) - (b[i] if i < len(b) else 0)
            for i in range(m)]


def poly_trim(a):
    a = list(a)
    while a and a[-1] == 0:
        a.pop()
    return a


def series_of_ratio(num, den, n_terms):
    """Taylor coefficients of num/den over the rationals; den[0] must be nonzero."""
    coeffs = []
    for k in range(n_terms + 1):
        c = Fraction(num[k] if k < len(num) else 0)
        for j in range(1, min(k, len(den) - 1) + 1):
            c -= den[j] * coeffs[k - j]
        coeffs.append(c / den[0])
    return coeffs


# --------------------------------------------------------------------------- #
# Irvine's conjectured generating functions, as (numerator, denominator) pairs
# --------------------------------------------------------------------------- #

ONE_MINUS_X7 = poly_pow([1, -1], 7)

SEQUENCES = {
    "A397730": {
        "min_shared": 2, "reflections": False, "period": 3, "deg_bound": 22,
        "closed_form": thm_A397730,
        "num": poly_mul([0, 0, 1], [1, 7, 52, 42, 28, -10, -1, 1]),
        "den": poly_mul([1, 1, 1], ONE_MINUS_X7),
    },
    "A397731": {
        "min_shared": 2, "reflections": True, "period": 6, "deg_bound": 43,
        "closed_form": thm_A397731,
        "num": poly_mul([0, 0, 1], [1, 6, 32, 75, 78, 49, 4, -3, -3, 1]),
        "den": poly_mul(poly_mul([1, 1, 1], [1, 2, 1]), ONE_MINUS_X7),
    },
    "A397732": {
        "min_shared": 1, "reflections": False, "period": 3, "deg_bound": 22,
        "closed_form": thm_A397732,
        "num": poly_mul([0, 0, 0, 2], [1, 9, 39, 28, 3, -21, -5, 6]),
        "den": poly_mul([1, 1, 1], ONE_MINUS_X7),
    },
}


def read_bfile(aid):
    values = {}
    for line in (DATA / f"b{aid[1:]}.txt").read_text().splitlines():
        line = line.strip()
        if not line or line.startswith("#"):
            continue
        key, val = line.split()[:2]
        values[int(key)] = int(val)
    return values


# --------------------------------------------------------------------------- #
# the checks
# --------------------------------------------------------------------------- #


def check_brute_force(nmax, bfiles):
    print("=" * 78)
    print("A. Brute force from first principles vs OEIS b-files")
    print("=" * 78)
    print(f"{'n':>3} | {'A397730':>12} {'b-file':>12} | {'A397731':>12} {'b-file':>12}"
          f" | {'A397732':>12} {'b-file':>12}")
    ok = True
    for n in range(1, nmax + 1):
        row = []
        for aid in ("A397730", "A397731", "A397732"):
            spec = SEQUENCES[aid]
            got = orbit_count(n, spec["min_shared"], spec["reflections"])
            want = bfiles[aid].get(n)
            ok &= (got == want)
            row += [got, want]
        print(f"{n:3d} | {row[0]:12d} {row[1]:12d} | {row[2]:12d} {row[3]:12d}"
              f" | {row[4]:12d} {row[5]:12d}")
    print(f"\n  brute force agrees with the b-files for n = 1..{nmax}: {ok}")
    return ok


def check_graph_invariants(nmax):
    print("\n" + "=" * 78)
    print("B. Graph invariants recomputed directly vs the paper's closed forms")
    print("=" * 78)
    print(f"{'n':>3} | {'E_s':>5} {'3n(n-1)/2':>9} | {'P2_s':>6} {'3(n-1)^2':>9}"
          f" | {'T_s':>4} | {'E_p':>5} {'6(n-1)^2':>9} | {'P2_p':>7} {'66n^2-225n+198':>14}"
          f" | {'T_p':>6} {'Lemma 5.5':>9}")
    ok = True
    for n in range(2, nmax + 1):
        es, p2s, ts = graph_invariants(n, 2)
        ep, p2p, tp = graph_invariants(n, 1)
        row_ok = (es == lemma_E_side(n) and p2s == lemma_P2_side(n) and ts == 0
                  and ep == lemma_E_point(n) and p2p == lemma_P2_point(n)
                  and tp == lemma_T_point(n))
        ok &= row_ok
        print(f"{n:3d} | {es:5d} {lemma_E_side(n):9d} | {p2s:6d} {lemma_P2_side(n):9d}"
              f" | {ts:4d} | {ep:5d} {lemma_E_point(n):9d} | {p2p:7d}"
              f" {lemma_P2_point(n):14d} | {tp:6d} {lemma_T_point(n):9d}"
              f"  {'ok' if row_ok else 'FAIL'}")
    print(f"\n  all graph invariants match for n = 2..{nmax} "
          f"(and the side graph is triangle-free): {ok}")
    return ok


def check_series_and_closed_forms(bfiles):
    print("\n" + "=" * 78)
    print("C+D. Irvine's g.f. series and the paper's closed forms vs the b-files")
    print("=" * 78)
    ok = True
    for aid, spec in SEQUENCES.items():
        bfile = bfiles[aid]
        nmax = max(bfile)
        series = series_of_ratio(spec["num"], spec["den"], nmax)
        gf_ok = all(series[n] == bfile[n] for n in range(1, nmax + 1))
        cf = spec["closed_form"]
        cf_ok = all(cf(n) == bfile[n] for n in range(2, nmax + 1))
        cf_series_ok = all(cf(n) == series[n] for n in range(2, nmax + 1))
        one_ok = bfile[1] == 0
        ok &= gf_ok and cf_ok and cf_series_ok and one_ok
        print(f"  {aid}: Irvine g.f. series == b-file for n=1..{nmax}: {gf_ok}")
        print(f"            closed form == b-file for n=2..{nmax}: {cf_ok}"
              f"   (a(1)=0: {one_ok})")
        print(f"            closed form == g.f. coefficients for n=2..{nmax}: "
              f"{cf_series_ok}")
    return ok


def check_rational_certificate():
    print("\n" + "=" * 78)
    print("E. Exact rational-function certificate")
    print("=" * 78)
    n_terms = 400
    ok = True
    certificate = {
        "generated_by": "code/verify_independent.py (check E)",
        "what": (
            "For each sequence a(n) is a proved quasi-polynomial of degree 6 and "
            "period p for n >= 2, with a(1) = 0; hence "
            "P(x) = (1-x^p)^7 * sum_{n>=1} a(n) x^n is a polynomial. Because "
            "D_Irvine divides (1-x^p)^7, the claim "
            "sum a(n) x^n = N_Irvine/D_Irvine is equivalent to the finite "
            "integer polynomial identity "
            "P*D_Irvine - N_Irvine*(1-x^p)^7 == 0. Coefficient lists are in "
            "ascending degree order."),
        "sequences": {},
    }
    for aid, spec in SEQUENCES.items():
        period = spec["period"]
        bound = spec["deg_bound"]
        cf = spec["closed_form"]
        a = [0] * (n_terms + 1)
        for n in range(2, n_terms + 1):
            v = cf(n)
            assert v.denominator == 1, (aid, n, v)
            a[n] = v.numerator
        one_minus = [0] * (period + 1)
        one_minus[0], one_minus[period] = 1, -1
        big_den = poly_pow(one_minus, 7)
        product = [0] * (n_terms + 1)
        for i, c in enumerate(a):
            if c:
                for j, d in enumerate(big_den):
                    if i + j <= n_terms:
                        product[i + j] += c * d
        tail_ok = all(product[k] == 0
                      for k in range(bound + 1, n_terms + 1 - 7 * period))
        P = poly_trim(product[: bound + 1])
        lhs = poly_mul(P, spec["den"])
        rhs = poly_mul(spec["num"], big_den)
        identity = poly_trim(poly_sub(lhs, rhs))
        ok &= tail_ok and not identity
        print(f"  {aid}: (1-x^{period})^7 F(x) is a polynomial of degree "
              f"{len(P) - 1} (bound {bound}); tail vanishes through "
              f"x^{n_terms - 7 * period}: {tail_ok}")
        print(f"            P(x) = {P}")
        print(f"            P*D_Irvine - N_Irvine*(1-x^{period})^7 = "
              f"{identity}  ->  {'IDENTITY HOLDS' if not identity else 'FAILS'}")
        certificate["sequences"][aid] = {
            "period": period,
            "degree_bound": bound,
            "P_coeffs_ascending": P,
            "irvine_numerator_ascending": poly_trim(list(spec["num"])),
            "irvine_denominator_ascending": poly_trim(list(spec["den"])),
            "identity_is_zero": not identity,
            "tail_vanishes": tail_ok,
        }
    cert_dir = Path(__file__).resolve().parent.parent / "certificates"
    cert_dir.mkdir(exist_ok=True)
    cert_path = cert_dir / "gf_polynomial_identity.json"
    cert_path.write_text(json.dumps(certificate, indent=1) + "\n")
    print(f"\n  certificate written to certificates/{cert_path.name}")
    return ok


def check_ludwig(nmax):
    """Independent confirmation against Heinrich Ludwig's 2017 OEIS formulas."""
    print("\n" + "=" * 78)
    print("F. Cross-check against Ludwig's 2017 formulas for A289224 / A289230")
    print("=" * 78)

    def a289224(N):
        return F(N ** 6 - 6 * N ** 5 - 24 * N ** 4 + 208 * N ** 3 - 67 * N ** 2
                 - 1684 * N + 2712, 6)

    def a289230(N):
        v = F(N ** 6 - 6 * N ** 5 - 24 * N ** 4 + 220 * N ** 3 - 153 * N ** 2
              - 1488 * N + 2592, 36)
        if N % 2 == 1:
            v -= F(1, 2)
        if N % 3 == 1:
            v -= F(2, 9)
        return v

    ok = True
    print(f"{'n':>3} | {'i3(point) brute':>15} {'A289224(n+1)':>13}"
          f" | {'D3 point brute':>14} {'A289230(n+1)':>13}")
    for n in range(3, nmax + 1):
        cells = grid_cells(n)
        nbr = adjacency_lists(cells, 1)
        i3 = sum(1 for a, b, c in itertools.combinations(range(len(cells)), 3)
                 if b not in nbr[a] and c not in nbr[a] and c not in nbr[b])
        d3 = orbit_count(n, 1, True)
        l224, l230 = a289224(n + 1), a289230(n + 1)
        row_ok = (i3 == l224 and d3 == l230)
        ok &= row_ok
        print(f"{n:3d} | {i3:15d} {str(l224):>13} | {d3:14d} {str(l230):>13}"
              f"  {'ok' if row_ok else 'FAIL'}")
    print(f"\n  Ludwig's 2017 formulas reproduced for n = 3..{nmax}: {ok}")
    return ok


def check_calibration(nmax):
    """The same machinery, with the adjacency condition dropped, must reproduce
    the two PROVEN unrestricted orbit counts A396285 and A396761."""
    print("\n" + "=" * 78)
    print("G. Calibration against the proven unrestricted counts A396285 / A396761")
    print("=" * 78)

    def a396285(n):
        return F(comb(n * n, 3) + 2 * (n * n // 3), 3)

    def a396761(n):
        return F(a396285(n) + F((n - 1) * n * n, 2) + comb(n, 3), 2)

    ok = True
    print(f"{'n':>3} | {'C3 brute':>12} {'A396285':>12} | {'D3 brute':>12} {'A396761':>12}")
    for n in range(1, nmax + 1):
        # min_shared = 3 means "share all three vertices", i.e. no constraint at
        # all between distinct cells -- the unrestricted count.
        c3 = orbit_count(n, 3, False)
        d3 = orbit_count(n, 3, True)
        want_c3, want_d3 = a396285(n), a396761(n)
        row_ok = (c3 == want_c3 and d3 == want_d3)
        ok &= row_ok
        print(f"{n:3d} | {c3:12d} {str(want_c3):>12} | {d3:12d} {str(want_d3):>12}"
              f"  {'ok' if row_ok else 'FAIL'}")
    print(f"\n  unrestricted counts reproduced for n = 1..{nmax}: {ok}")
    return ok


def main():
    ap = argparse.ArgumentParser()
    ap.add_argument("--nmax", type=int, default=12)
    args = ap.parse_args()

    bfiles = {aid: read_bfile(aid) for aid in SEQUENCES}
    results = {
        "A brute force vs b-files": check_brute_force(args.nmax, bfiles),
        "B graph invariants": check_graph_invariants(args.nmax),
        "C+D series and closed forms": check_series_and_closed_forms(bfiles),
        "E rational certificate": check_rational_certificate(),
        "F Ludwig 2017 cross-check": check_ludwig(min(args.nmax, 11)),
        "G calibration A396285/A396761": check_calibration(min(args.nmax, 10)),
    }
    print("\n" + "=" * 78)
    for name, value in results.items():
        print(f"  {name:32s} {'PASS' if value else 'FAIL'}")
    all_ok = all(results.values())
    print("=" * 78)
    print("ALL CHECKS PASS" if all_ok else "*** SOME CHECK FAILED ***")
    return 0 if all_ok else 1


if __name__ == "__main__":
    raise SystemExit(main())
\end{lstlisting}

\end{document}